\documentclass[9pt,shortpaper,twoside,print]{IEEEtran}
\pdfminorversion=4
\usepackage{silence}
\WarningsOff

\usepackage[noadjust]{cite}
\usepackage{amsmath,amssymb,amsfonts}
\usepackage{amsthm}
\usepackage{algorithmic}
\usepackage{algorithm}
\usepackage{graphicx}
\usepackage{graphbox}
\usepackage{textcomp}
\usepackage{multirow}
\usepackage{balance}
\usepackage{xcolor}
\usepackage[export]{adjustbox}
\usepackage{color}
\usepackage{accents}
\usepackage{textcomp}
\usepackage{comment}
\usepackage{bm}
\usepackage{tikz}

\usepackage[shortlabels]{enumitem}
\def\BibTeX{{\rm B\kern-.05em{\sc i\kern-.025em b}\kern-.08em
    T\kern-.1667em\lower.7ex\hbox{E}\kern-.125emX}}

\definecolor{subsectioncolor}{rgb}{0,0.541,0.855}

\newcommand{\black}[1]{\color{black}{#1}\color{black}\phantom{}}
\setlength{\tabcolsep}{0.4em}

\definecolor{LightCyan}{rgb}{0.85,0.85,1.0}
\setcounter{MaxMatrixCols}{20}

\newtheorem{theorem}{Theorem}

\newtheorem{lemma}{Lemma}
\newtheorem{remark}{Remark}

\newtheorem{proposition}{Proposition}
\newtheorem{assumption}{Assumption}

\renewcommand{\det}{\text{ \normalfont{det}}}

\newcommand{\cov}{\normalfont\textsf{\footnotesize cov}}

\newcommand{\tr}{\normalfont\text{\normalfont tr}}
\newcommand{\mfs}[1]{{\normalfont\textsf{#1}}}
\newcommand{\mf}{\mathbf}

\DeclareMathOperator*{\argmin}{arg\,min}

\makeatletter
\let\NAT@parse\undefined
\makeatother
\usepackage[hidelinks]{hyperref}

\hyphenation{ope-ra-ting li-nea-ri-za-tion con-fi-gu-ra-tions em-pi-ri-ca-lly ex-pe-ri-ments e-llip-ti-cal bio-me-di-cal ca-te-go-ries pro-per-ties nu-me-ri-cal co-rres-pond stra-te-gy know-ledge ana-ly-sis gua-ran-tee asymp-to-ti-ca-lly co-ro-lla-ry per-form-ance par-ti-cu-lar pro-per-ties}

\begin{document}

\title{Distributed Discrete-time Dynamic Outer Approximation \\ of the Intersection of Ellipsoids}
\author{Eduardo Sebasti\'{a}n$^*$, Rodrigo Aldana-L\'{o}pez$^*$, Rosario Aragü\'{e}s, Eduardo Montijano and Carlos Sagü\'{e}s
\thanks{This work was supported by ONR Global
grant N62909-24-1-2081, the Spanish projects PID2021-125514NB-I00, PID2021-124137OB-I00, TED2021-130224B-I00
funded by MCIN/AEI/10.13039/501100011033, by ERDF A way of making Europe and by the
European Union NextGenerationEU/PRTR, Project DGA T45-23R by the Gobierno de Aragón, by the Universidad de Zaragoza and Banco Santander, by CONACYT-Mexico grant 739841 and Spanish grant FPU19-05700.}%
\thanks{All the authors are with the Departamento de Inform\'atica e Ingenier\'ia de Sistemas and the Instituto de Investigaci\'on en Ingenier\'ia de Arag\'on, Universidad de Zaragoza, Spain 
(email:\texttt{\scriptsize  esebastian@unizar.es, rodrigo.aldana.lopez@gmail.com, raragues@unizar.es, emonti@unizar.es, csagues@unizar.es}). $^*$Equal contribution.}%
}

\newcommand\copyrighttext{%
  \footnotesize \textcopyright This paper has been accepted for publication at IEEE Transactions on Automatic Control. Please, when citing the paper, refer to the official manuscript with the following DOI: 10.1109/TAC.2025.3540423.}
\newcommand\copyrightnotice{%
\begin{tikzpicture}[remember picture,overlay]
\node[anchor=south,yshift=10pt] at (current page.south) {\fbox{\parbox{\dimexpr\textwidth-\fboxsep-\fboxrule\relax}{\copyrighttext}}};
\end{tikzpicture}%
}

\maketitle
\copyrightnotice

\begin{abstract}
This paper presents the first discrete-time distributed algorithm to track the tightest ellipsoids that outer approximates the global dynamic intersection of ellipsoids. Given an undirected network, we consider a setup where each node measures an ellipsoid, defined as a time-varying positive semidefinite matrix. The goal is to devise a distributed algorithm to track the tightest outer approximation of the intersection of all the ellipsoids. The solution is based on a novel distributed reformulation of the original centralized semi-definite outer L\"owner-John program, characterized by a non-separable objective function and global constraints. We prove finite-time convergence to the global minima of the centralized problem in the static case and finite-time bounded tracking error in the dynamic case. Moreover, we prove boundedness of estimation in the tracking of the global optimum and robustness in the estimation against time-varying inputs. 
We illustrate the properties of the algorithm with different simulated examples, including a distributed estimation showcase where our proposal is integrated into a distributed Kalman filter to surpass the state-of-the-art in mean square error performance.
\end{abstract}

\begin{IEEEkeywords}
Consensus, distributed optimization, discrete time systems, ellipsoidal methods
\end{IEEEkeywords}

\section{Introduction}\label{sec:intro}

\IEEEPARstart{F}{rom} safe control to distributed sensor fusion, one fundamental problem in control systems is how to approximate system measurements, states, or constraints such that their essential features are preserved, being simple enough to be handled. One of the most popular representations is ellipsoids~\cite{Lasserre2015Generalization}, characterized by symmetric and positive semi-definite $n$-dimensional matrices. Ellipsoids are chosen to describe the conservative shape of an obstacle~\cite{ malyuta2021advances}, obstacle-free regions \cite{deits2015computing}, estimated quantities with their associated uncertainty \cite{Sebastian2024TAC}, very large multi-agent populations~\cite{saravanos2023distributed}, or patterns and geometrical objects to be identified and clustered \cite{yang2022certifiably}. All these examples are related to or can be posed as finding the tightest outer ellipsoid approximating a convex set \cite{vandenberghe1996semidefinite}. 

Specifically, when the convex set is defined as the intersection of $\mathsf{N}$ ellipsoids, the outer Löwner-John method~\cite{Henrion2001LMI} provides an approximate solution derived from a rank constraint relaxation. No distributed method exists, yet, to compute the solution of the outer L\"owner-John method when the ellipsoids evolve with time despite its promising applications. Possible applications of such solution include (i) stochastic distributed estimation under unknown correlations where ellipsoids represent the covariance of the most accurate fusion  \cite{Sebastian2021CDC,Sebastian2024TAC}, (ii) robust cooperative control where ellipsoids represent a common safe region \cite{schwarting2021stochastic}, and (iii) computer vision where the resulting ellipsoid represents, e.g., the best clustering that separates two sets of data points in a collaborative task \cite{Lasserre2015Generalization}. 

Currently, only a continuous-time and static solution exists \cite{aldana2023distributed}, with no straightforward extension for the dynamic case. The approach taken in this work is radically different, by developing a distributed algorithm that works in discrete time (i.e., deployable in realistic settings) and under time-varying ellipsoids.

In this work, we propose the first distributed discrete-time dynamic algorithm to compute the solution of the original centralized outer L\"owner-John method, approximating the global solution to the ellipsoid intersection problem. From a distributed optimization point of view, compared to other works 
(Sec.~\ref{sec:related}), the L\"owner-John problem is comprised by a centralized optimization program with a non-separable objective function over positive definite matrices and with global coupling constraints (Sec.~\ref{sec:problem}), preventing the use of current distributed optimization methods. The proposed algorithm is based on a novel distributed reformulation of the outer L\"owner-John method  (Sec.~\ref{sec:distributed}). Each node solves a local semi-definite program whilst the agreement value tracks the dynamic intersection of ellipsoids. We analyze the theoretical properties of the algorithm (Sec. \ref{sec:static}). For constant input ellipsoids, we prove finite-time convergence to the global minima of the centralized problem; for time-varying input ellipsoids, we prove finite-time bounded tracking error. We prove that our algorithm guarantees boundedness of the estimates and robustness on the feasibility of global tracking under dynamic input ellipsoids. We illustrate the proposed algorithm in simulated experiments (Sec.~\ref{sec:examples}), including an integration on a distributed Kalman filter for stochastic estimation (Sec.~\ref{sec:application}).

\section{Related Work}\label{sec:related}

The outer L\"owner-John method is a semi-definite program \cite{Henrion2001LMI} with three main characteristics: (i) the objective function is not necessarily separable, so it cannot be expressed as the sum of local functions; (ii) some of the constraints couple all the optimization variables; and (iii) the optimization variables are symmetric positive definite variables. All together, these features lead to a challenging problem from a distributed algorithmic perspective.

Many distributed optimization solutions depart from a centralized optimization formulation and then derive consensus-based algorithms to reconstruct global quantities and converge to the global minimizer \cite{notarstefano2019distributed}. When the objective function is separable as a sum of local (strongly) convex smooth functions, dynamic consensus over the optimization variables and the gradients \cite{carnevale2023triggered} can be exploited to design distributed versions of gradient descent \cite{Nedic2014Distributed}, general first order optimization methods \cite{ van2022universal} or second order {Newton-Raphson}-like methods \cite{varagnolo2015newton}. Despite proving the success of consensus-based approaches for distributed optimization, these solutions are not suitable  when the objective function cannot be decomposed into local smooth convex functions. 

To deal with constraints, some works propose extensions of consensus-based algorithms that rely on projection methods \cite{mai2023distributed}, population dynamics equations \cite{martinez2022discrete}, or subgradient methods \cite{romao2021subgradient}. The most popular alternative to address global coupling constraints is the use of a primal and/or dual proximal stage at each node \cite{notarnicola2019constraint}. In particular, the alternating direction method of multipliers \cite{bastianello2022novel} and the Douglas-Rachford splitting \cite{bredies2022graph} allow to handle equality coupling constraints among nodes by solving simultaneously the  primal and dual optimization problem. Typically, these distributed proximal methods either assume separability of the original objective function \cite{meng2015proximal} and/or are restricted to scalar quantities \cite{parikh2014block}. Therefore, they are not suitable for distributing the outer L\"owner-John method. In the presence of constraints and a linear objective function, it is possible to find distributed semi-definite problem reformulations \cite{li2021distributed} that are solved through primal-dual methods. With a similar spirit to our work, \cite{burger2013polyhedral} proposes a polyhedral outer approximation in a distributed optimization context. Nevertheless, in this case the polyhedral approximation represents a convex constraint set rather than the optimization objective. 

To address the non-separability of the objective function \cite{yang2019survey}, some works assume that the optimization variables can be divided in two subsets \cite{wendell1976minimization}, which is not possible in the outer L\"owner-John method since it entails an ellipsoidal object and not scalars. Thus, it is common to develop ad hoc solutions that are based on successive approximations \cite{scutari2019distributed} or decompositions \cite{meselhi2022decomposition}, again not suitable for symmetric positive semi-definite matrix variables. 

{From an application point of view, the outer L\"owner-John method has been widely used for stochastic distributed estimation under the name of Covariance Intersection (CI) \cite{julier1997non}. CI is a method to consistently integrate estimates of neighboring nodes in networks with unknown cross-correlations. CI is optimal for two nodes \cite{Reinhardt2015CI}, but in general it is suboptimal \cite{cros2023optimality}. The consistency property motivates the use of CI for distributed Kalman filtering \cite{wei2018stability, yan2024distributed}. Nevertheless, despite improved performance upon classical distributed Kalman filters \cite{Olfati2007DKF}, the lack of optimality guarantees in the fusion leads to suboptimality for these filtering alternatives. Other extensions of CI include the Covariance Union (CU) \cite{uhlmann2003covariance} and the Split Covariance Intersection (SCI) \cite{Julier2001CI}; the former determines the tightest ellipsoid that guarantees consistency regardless of which of the estimates is consistent, while the latter splits the admissible set of cross-covariances to tighten the conservative bound provided by CI. They have both being applied in filtering problems \cite{reece2010generalised,li2013cooperative}. In pursuit of optimality guarantees in stochastic distributed estimation, we recently used the original outer L\"owner-John method to derive the certifiable optimal distributed Kalman filter under unknown correlations \cite{Sebastian2021CDC, Sebastian2024TAC}. These works restrict the outer L\"owner-John method to the local neighborhood of each node, rather than reconstructing the global optimum provided by all the node estimates. For this reason, and to demonstrate the benefits of our proposed algorithm, we integrate the solution in a distributed Kalman filter.}

\section{Problem statement}\label{sec:problem}

{\textbf{Notation: } $\tr(\bullet), \det(\bullet)$ denote trace and determinant. Let $\mathbb{R}$ be the real number set. $\mathbb{S}^n,\mathbb{S}^n_{++},\mathbb{S}^n_+\subset\mathbb{R}^{n\times n}$ denote the sets of symmetric, positive definite and positive semi-definite matrices respectively. We use $\mf{0}, \mf{I}$ for the zero and identity matrices of appropriate dimensions. We denote by $\mf{0}\preceq \mf{P}$ when a matrix $\mf{P}\in\mathbb{R}^{n\times n}$ is positive semi-definite. Let $\mfs{relint}(\bullet)$ and $\mfs{rebdr}(\bullet)$ represent the relative interior and relative boundary operators respectively. Let $\mfs{co}(\mathcal{C})$ denote the convex hull of a set $\mathcal{C}$.}

Consider a network of $\mfs{N}$ agents which communicate according to an undirected graph $\mathcal{G}=(\mathcal{I},{\mathcal{F}})$, where \mbox{$\mathcal{I} = \{1,\dots,\mfs{N}\}$} is the vertex set and ${\mathcal{F}}\subset \mathcal{I}\times\mathcal{I}$ is the edge set representing the communication links between agents. The set of neighbors of agent $i$, including $i$, is $\mathcal{N}_i = \{j \in \mathcal{I} | (i,j) \in {\mathcal{F}}\}\cup\{i\}$. 

At each discrete-time step $k\in\{0,1,\dots\}$, each agent $i\in\mathcal{I}$ measures a positive definitive \textit{input} matrix $\mf{P}_i[k]\in\mathbb{S}^n_{++}$. To provide some intuition, this matrix can be pictured as representing the uncertainty over an estimate \cite{Olfati2007DKF, Julier2017General}, where $\mf{P}_i[k]^{-1}$ represents its information matrix. Such information matrix characterizes the ellipsoid \mbox{$\mathcal{E}(\mf{P}_i[k]^{-1})=\{\mf{y}\in\mathbb{R}^n : \mf{y}^\top\mf{P}_i[k]^{-1}\mf{y}\leq 1\}$}. In practice, the input matrices $\mf{P}_i[k]$ cannot have arbitrarily large values, and the variation of $\mf{P}_i[k]$ from instant $k$ to $k+1$ cannot be arbitrarily large, so in this work we do the following assumption. 
\begin{assumption}\label{as:bounded} {There exist $0 < \underline{p}\leq  \overline{p}$
 such that the input matrices $\mf{P}_i[k]$, $\forall i\in\mathcal{I}$ and $\forall k\in\{0,1,\dots\}$, satisfy
$$
\begin{aligned}
\underline{p}\mf{I}&\preceq \mf{P}_i[k]\preceq \overline{p}\mf{I
}.\\
\end{aligned}
$$
The previous inequalities provide bounds for the eigenvalues of $\mf{P}_i[k]$ for all time.}
\end{assumption}

The goal for all the agents is to agree on a matrix $\mf{Q}^*[k]$ that represents the tightest outer approximation of the intersection of the ellipsoids $\mathcal{E}(\mf{P}_i[k]^{-1})$, as shown in Fig. \ref{fig:problem_setting}. Intuitively, $\mf{Q}^*[k]$ represent the information matrix for such covering ellipsoid. 
Formally, $\mf{Q}^*[k]$ is the result of the following optimization problem:
\begin{equation}
 \label{eq:prob}
\begin{aligned}
\mf{Q}^*[k] &= \argmin_{\mf{Q}\in\mathcal{C}^*[k]} f(\mf{Q}) \\
\mathcal{C}^*[k] = \bigg\{&\mf{Q}\in\mathbb{S}_+^n : \exists \lambda_1,\dots,\lambda_{\mfs{N}}\in[0,1],\\& \sum_{j=1}^\mfs{N}\lambda_j \leq 1,  \mf{0}\preceq \mf{Q} \preceq \sum_{j=1}^\mfs{N} \lambda_j \mf{P}_j[k]^{-1} \bigg\}
\end{aligned}
\end{equation}
\begin{figure}
    \centering
    \includegraphics[width=0.8\columnwidth]{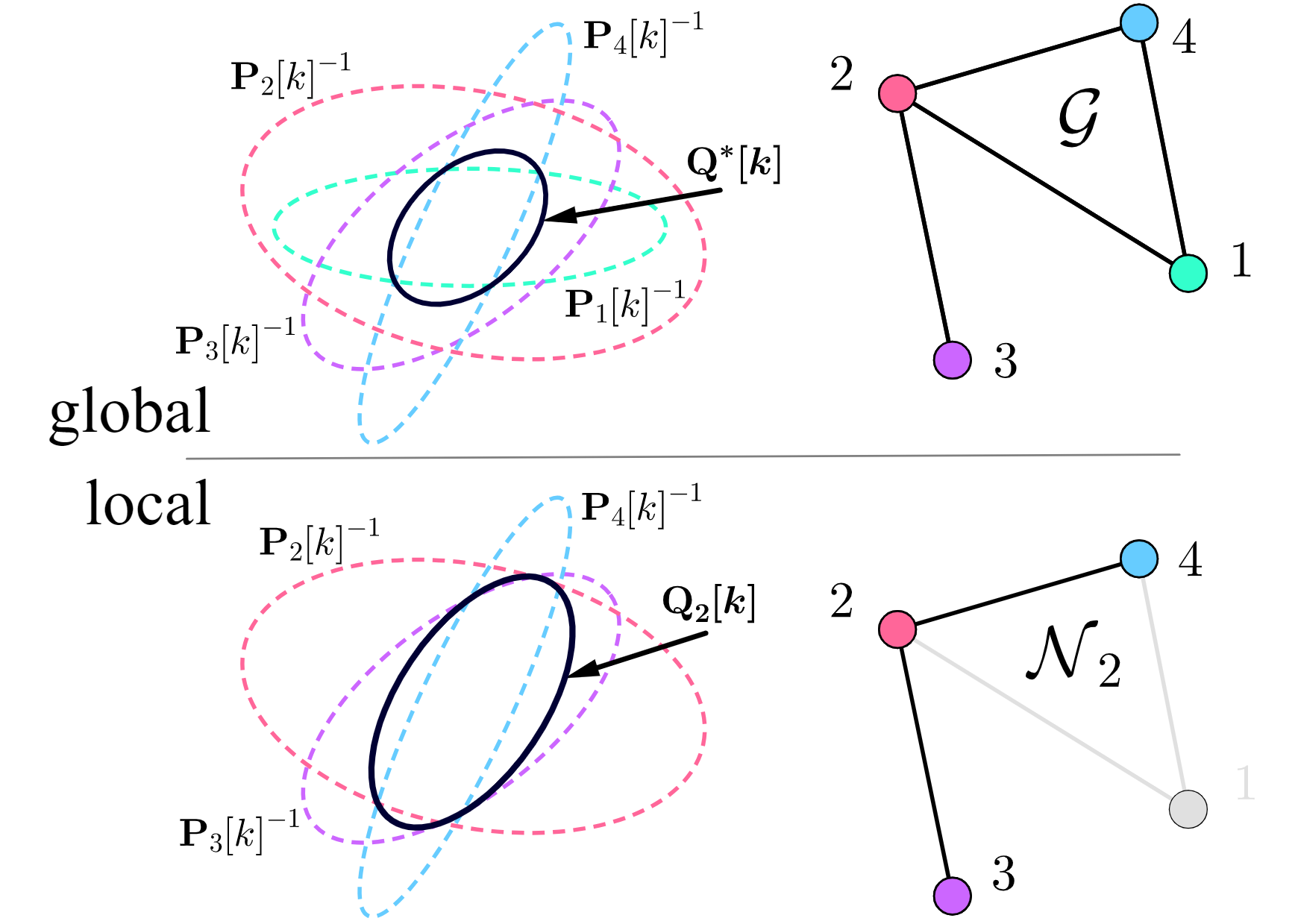}
    \caption{Problem setting: (top) each node in graph $\mathcal{G}$ represents a time-varying ellipsoid described by the matrix $\mathbf{P}_i[k]  \quad \forall i \in \{1, 2, 3, 4\}$. The goal is to cooperate to find the smallest ellipsoid that outer approximates their intersection (black ellipsoid for $\mathbf{Q}^*[k]$); (bottom) each node (e.g., node $2$) only has access to its neighboring information, leading to conservative approximations of $\mathbf{Q}^*[k]$ (black ellipsoid for $\mathbf{Q}_2[k]$). In this work we propose a distributed method to make $\mathbf{Q}_i[k]$ converge to $\mathbf{Q}^*[k]$ using local information.  }
    \label{fig:problem_setting}
\end{figure}
where we make the following assumption on the function $f$: 
\begin{assumption}
\label{as:unbounded}
The function $f:\mathbb{S}_{+}^n\to\mathbb{R}$ satisfies:
\begin{enumerate}
\item $f$ has continuous derivative  over all $\mathbb{S}_+^n$.
\item $f$ is {strictly} convex, meaning that, for any
$c_1,c_2 {\geq} 0,$ \mbox{$ \mf{Q}_1,\mf{Q}_2\in\mathbb{S}_{+}^n$} with $c_1+c_2=1$ it follows that:
\mbox{$
f(c_1\mf{Q}_1 + c_2\mf{Q}_2)< c_1f(\mf{Q}_1)+c_2f(\mf{Q}_2)
$}.
\item $f$ has no lower bound over $\mathbb{S}_+^n$, meaning that $\inf_{\mf{Q}\in\mathbb{S}_{+}^n} f(\mf{Q})$ does not exists.
\item $f$ is bounded over any closed bounded convex set $\mathcal{C}\subset\mathbb{S}_{+}^n$ meaning that $\inf_{\mf{Q}\in\mathcal{C}} f(\mf{Q})\in\mathbb{R}$.
\end{enumerate}
\end{assumption}
Assumption \ref{as:unbounded} holds for many popular choices of $f(\bullet)$ related to the volume of ellipsoids or information theoretic measures, such as $\log(\det((\bullet)^{-1}))$ or $\tr((\bullet)^{-1})$. Moreover, Assumption \ref{as:unbounded}-4) is an instrumental property to be used in subsequent analysis.

The program in \eqref{eq:prob} is known as the outer L\"owner-John method {and the intuition behind its structure is the following. The minimization of the objective function seeks to minimize the size of the output ellipsoid. For a problem with $\mathsf{N}=2$ nodes, the first constraint of $\mathcal{C}^*[k]$ means that, if $\lambda_1=0$, then $\mathcal{E}(\mathbf{Q}_1) \subseteq \mathcal{E}(\mathbf{Q}_2)$, i.e., the ellipsoid at node $1$ is contained in the ellipsoid at node $2$. In general, $\lambda_1, \lambda_2 > 0$, which means that the two ellipsoids have an overlapping area in common, characterized by the convex combination of $\mf{P}_1[k]^{-1}, \mf{P}_2[k]^{-1}$, which is the second constraint in $\mathcal{C}^*[k]$. The ellipsoid $\mf{Q}^*[k]$ is the tightest ellipsoid that covers such overlapping area. The same reasoning extends to the $\mathsf{N}>2$ case.}

The formulation of the outer L\"owner-John method is centralized, in the sense that all the inputs $\mf{P}_i[k]$ are needed to compute $\mf{Q}^*[k]$. However, in the aforementioned cooperative tasks, it is of key importance to develop a distributed solution such that each agent has a local version $\mf{Q}_i[k]$ of $\mf{Q}^*[k]$ computed from local interactions. 


\section{Distributed Discrete-time Outer Ellipse Computation}\label{sec:distributed}

To compute the global optimum of problem~\eqref{eq:prob}, we propose that, at each instant $k$, each agent $i$ solves the semi-definite program:
\begin{equation}
\label{eq:algo}
\begin{aligned}
\mf{Q}_i[k] &= \argmin_{\mf{Q}\in\mathcal{C}_i[k]} f(\mf{Q}) \\
\mathcal{C}_i[k] = \bigg\{&\mf{Q}\in\mathbb{S}_+^n : \exists \lambda_j^i,\lambda_\mf{P}^i\in[0,1], j\in\mathcal{N}_i, \lambda_\mf{P}^i + \sum_{j\in\mathcal{N}_i}^\mfs{N}\lambda_j^i \leq 1,
\\& \mf{0}\preceq \mf{Q} \preceq \lambda_\mf{P}^i\mf{P}_i[k]^{-1} + \frac{1}{\overline{\theta}}\sum_{j\in\mathcal{N}_i} \lambda_j^i \mf{Q}_j[k-1] \bigg\}
\end{aligned}
\end{equation}
{for $k\geq 1$}, with initial conditions $\mf{Q}_i[0] = \mf{P}_i[0]^{-1}$ {and a parameter $\overline{\theta}$ to be specified subsequently}. Note that each node only requires the solution of \eqref{eq:algo} at each neighbor in the previous instant. Therefore, \eqref{eq:algo} defines a distributed algorithm, described in Algorithm~\ref{al:algorithm}. Its properties are summarized in Theorem \ref{th:main}.

\begin{algorithm}
\caption{Distributed estimation of \eqref{eq:prob} at node $i$}\label{al:algorithm}
\begin{algorithmic}[1]
\STATE Initialization: $\lambda^i_j, \lambda_{\mathbf{P}}^i = 0\quad \forall j \in \mathcal{N}_i$, $\bar{\theta}_i = \bar{\theta} \geq 1$ and  ${\mathbf{Q}}_i[0] = \mathbf{P}_i[0]^{-1}$ 
\FOR{$k = 1, 2, \hdots$}
    \STATE Information exchange:
            \begin{itemize}
               \item[] Send $\mathbf{Q}_i[k-1]$ to neighbors $j \in \mathcal{N}_i$.
               \item[] Receive $\mathbf{Q}_j[k-1]$ from neighbors $j \in \mathcal{N}_i$.
           \end{itemize}
    \STATE Solve local reformulated outer L\"owner-John program: 
    \begin{itemize}
        \item[] $\mf{Q}_i[k] = \argmin_{\mf{Q}\in\mathcal{C}_i[k]} f(\mf{Q})$, with $\mathcal{C}_i[k]$ given in \eqref{eq:algo}.
    \end{itemize}
\ENDFOR
\end{algorithmic}
\end{algorithm}

\begin{theorem}
\label{th:main}
Let Assumptions \ref{as:bounded} and \ref{as:unbounded} hold and $\mathcal{G}$ be connected. Then, {for a network that executes Algorithm \ref{al:algorithm}, }the following statements are true:
\begin{enumerate}
    \item (Robustness)
    $\mathcal{C}_i[k]\subseteq \mathcal{C}^*[k]$ for all $k\in\{{1,2}\dots\}, i\in\mathcal{I}$.
    \item (Boundedness) $\mf{Q}_i[k]\preceq {(1/\underline{p})}\mf{I}$ for all $k\in\{{1,2}\dots\}, i\in\mathcal{I}$.
    \item (Convergence) Given any choice of norm $\|\bullet\|$ in $\mathbb{S}^n_{+}${ and any $\delta>0$, there exists $K\in\{{1,2},\dots\}, 0\leq\underline{\theta}<1< \bar{\theta}$ such that if
    $
    \underline{\theta}\mf{P}_i[k-1] \preceq \mf{P}_i[k]\preceq  \bar{\theta}\mf{P}_i[k-1], \forall k=1,2,\dots
    $
    then,}
$$
{\| \mf{Q}_i[k] - \mf{Q}^*[k] \| \leq \delta \text{ for any }k\geq K, i\in\mathcal{I}.} 
$$
{In addition, if $\underline{\theta}=\bar{\theta}=1$, then $\mf{Q}_i[k]=\mf{Q}^*[k]$.}
    
\end{enumerate}

\end{theorem}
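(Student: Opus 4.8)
The plan is to prove the three claims in the order stated, since each feeds the next, and to exploit throughout the single structural fact that every feasible point of a local or central program is dominated, in the Loewner order, by a convex combination of the input information matrices whose coefficients sum to at most one; the scaling $1/\overline{\theta}$ in \eqref{eq:algo} is precisely what absorbs one step of input growth so that this "total weight $\le 1$" bookkeeping is preserved.

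\textbf{Robustness and boundedness.} Both are inductions on $k$. For robustness I assume $\mathcal{C}_j[k-1]\subseteq\mathcal{C}^*[k-1]$, so each neighbour obeys $\mathbf{Q}_j[k-1]\preceq\sum_m \nu_m^j \mathbf{P}_m[k-1]^{-1}$ with $\sum_m \nu_m^j\le 1$. Using $\mathbf{P}_m[k-1]^{-1}\preceq\overline{\theta}\,\mathbf{P}_m[k]^{-1}$ (guaranteed by Assumption~\ref{as:bounded} once $\overline{\theta}\ge\overline{p}/\underline{p}$, and sharply by the upper half of the hypothesis of claim~3) gives $\tfrac1{\overline{\theta}}\mathbf{Q}_j[k-1]\preceq\sum_m \nu_m^j \mathbf{P}_m[k]^{-1}$. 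Substituting into the upper bound defining $\mathcal{C}_i[k]$ and collecting coefficients produces a combination of the $\mathbf{P}_m[k]^{-1}$ of total weight $\lambda_{\mathbf{P}}^i+\sum_j\lambda_j^i\le 1$, i.e. membership in $\mathcal{C}^*[k]$; the base case uses $\mathbf{Q}_j[0]=\mathbf{P}_j[0]^{-1}$. Boundedness is the same induction with $\mathbf{P}_m[\cdot]^{-1}\preceq(1/\underline{p})\mathbf{I}$ from Assumption~\ref{as:bounded}: since $1/\overline{\theta}\le 1$ and the weights sum to at most one, the convex-combination upper bound never exceeds $(1/\underline{p})\mathbf{I}$.

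\textbf{Static convergence ($\underline{\theta}=\overline{\theta}=1$).} First I establish monotonicity: choosing $\lambda_i^i=1$ shows $\mathbf{Q}_i[k-1]\in\mathcal{C}_i[k]$, hence $f(\mathbf{Q}_i[k])\le f(\mathbf{Q}_i[k-1])$; by robustness and optimality $f(\mathbf{Q}_i[k])\ge f(\mathbf{Q}^*)$, so each $f(\mathbf{Q}_i[k])$ is non-increasing and bounded below, and by boundedness the iterates stay in a compact set. Along a convergent subsequence the limits $\overline{\mathbf{Q}}_i$ satisfy $\overline{\mathbf{Q}}_j\in\overline{\mathcal{C}}_i$ (again via $\lambda_j^i=1$) with equal optimal value, so strict convexity (Assumption~\ref{as:unbounded}) forces $\overline{\mathbf{Q}}_i=\overline{\mathbf{Q}}_j$ for neighbours and, by connectivity, consensus $\overline{\mathbf{Q}}$ across the network. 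To identify $\overline{\mathbf{Q}}$ with $\mathbf{Q}^*$ I use that, since $f$ has no lower bound (Assumption~\ref{as:unbounded}-3), every minimizer sits on the upper Loewner boundary of its feasible set; the fixed-point relation then yields the first-order inequalities $\langle\nabla f(\overline{\mathbf{Q}}),\,\mathbf{P}_i[k]^{-1}-\overline{\mathbf{Q}}\rangle\ge 0$ for every $i$. Averaging these with the optimal weights $\mu^*$ of $\mathbf{Q}^*=\sum_m\mu_m^*\mathbf{P}_m[k]^{-1}$ (active boundary, so $\sum_m\mu_m^*=1$) gives $\langle\nabla f(\overline{\mathbf{Q}}),\,\mathbf{Q}^*-\overline{\mathbf{Q}}\rangle\ge 0$, contradicting $f(\mathbf{Q}^*)<f(\overline{\mathbf{Q}})$; hence $\overline{\mathbf{Q}}=\mathbf{Q}^*$. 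Finite-time exactness then follows from propagation: once any node has $\mathbf{Q}_j[k]=\mathbf{Q}^*$, each neighbour copies it via $\lambda_j^i=1$, and robustness plus uniqueness force $\mathbf{Q}_i[k+1]=\mathbf{Q}^*$, so optimality spreads to the whole network within the graph diameter.

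\textbf{Dynamic convergence and the main obstacle.} For time-varying inputs I treat $\mathbf{Q}^*[k]$ as the solution of a parametric strictly convex SDP in the data $\{\mathbf{P}_m[k]^{-1}\}$. Strict convexity makes the minimizer unique, and with the uniform bounds of Assumption~\ref{as:bounded} confining the data to a compact set, Berge's maximum theorem makes $k\mapsto\mathbf{Q}^*[k]$ uniformly continuous, so the per-step drift $\|\mathbf{Q}^*[k]-\mathbf{Q}^*[k-1]\|$ is bounded by a modulus $\omega(\overline{\theta},\underline{\theta})$ that vanishes as $\overline{\theta},\underline{\theta}\to 1$. Over any window of length equal to the diameter the distributed iteration is, by the static analysis, a finite-time contraction to the frozen optimum, while the moving target reinjects at most (diameter)$\times\,\omega$ of error per window; balancing these and choosing $\overline{\theta},\underline{\theta}$ close enough to $1$ makes the residual at most $\delta$ after a transient $K$, giving claim~3, with $\omega\equiv 0$ recovering the exact static statement. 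I expect the genuinely hard steps to be (i) upgrading the asymptotic consensus of the static case to \emph{finite-time} exactness — showing the monotone, compact iteration actually attains $\mathbf{Q}^*$ rather than merely approaching it, for which I would argue that the finitely many possible active-support patterns force the iterates to stop changing once the optimal pattern is reached — and (ii) making the window/drift accounting in the dynamic case quantitative, since the static result is a finite-time rather than a geometric contraction and the propagation and drift horizons must be reconciled with care.
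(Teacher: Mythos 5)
Your proposal follows the paper's overall architecture---induction with convex-combination bookkeeping for robustness and boundedness, monotone descent plus consensus plus fixed-point identification for the static case, and a perturbation argument for the dynamic case---but the central step is argued by a genuinely different route. Where the paper (Lemma~\ref{lem:global:optim}) identifies the consensus equilibrium $\overline{\mf{Q}}$ with $\mf{Q}^*$ through a geometric contradiction involving the star of relative boundaries, the polytope $\mathcal{P}=\mfs{co}\{\mf{P}_i^{-1}\}_{i=1}^{\mfs{N}}$, level sets with two supporting hyperplanes, and a recursive case analysis on $\mfs{dim}(\mathcal{P})$, you use first-order optimality: since $\mf{P}_i^{-1}\in\mathcal{C}_i$ at equilibrium, $\langle\nabla f(\overline{\mf{Q}}),\mf{P}_i^{-1}-\overline{\mf{Q}}\rangle\geq 0$ for every $i$; averaging with the weights of $\mf{Q}^*\in\mfs{co}\{\mf{P}_i^{-1}\}_{i=1}^{\mfs{N}}$ (item~\ref{it:extrema} of Lemma~\ref{le:properties}) and invoking convexity of $f$ gives $f(\mf{Q}^*)\geq f(\overline{\mf{Q}})$, which contradicts optimality unless $\overline{\mf{Q}}=\mf{Q}^*$. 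This variational-inequality argument uses the same hypotheses (differentiability and strict convexity from Assumption~\ref{as:unbounded}) and is shorter and cleaner than the paper's supporting-hyperplane case analysis; it is a legitimate replacement for the core of Lemma~\ref{lem:global:optim}.

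Two caveats on the steps you yourself flag as hard. First, exact finite-time attainment of $\mf{Q}^*$ in the static case: your propagation mechanism (once one node holds $\mf{Q}^*$, robustness together with item~1 of Lemma~\ref{le:char} spreads it within the graph diameter) is precisely the paper's, and neither you nor the paper fully closes the gap between asymptotic consensus and some node reaching $\mf{Q}^*$ exactly; your ``finitely many active-support patterns'' idea is a plausible way to attempt it but is not carried out. Second, your dynamic bookkeeping treats the static iteration as a ``finite-time contraction over a diameter-length window,'' which the static analysis does not supply from arbitrary states. The paper instead applies the Hausdorff-continuity perturbation once over the entire finite horizon $[0,K']$ fixed by the nominal (frozen-input) trajectory, takes the supremum of $K'$ and the infimum of the admissible $\varepsilon$ over the compact set of initial conditions guaranteed by boundedness, and then proves forward invariance of the $\delta$-neighborhood step by step. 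Your window-by-window drift balance would need that same uniformity over compact state sets to be made rigorous; as written it is the one place where the argument would not yet go through.
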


\begin{figure}
    \centering
    \includegraphics[width=0.9\columnwidth]{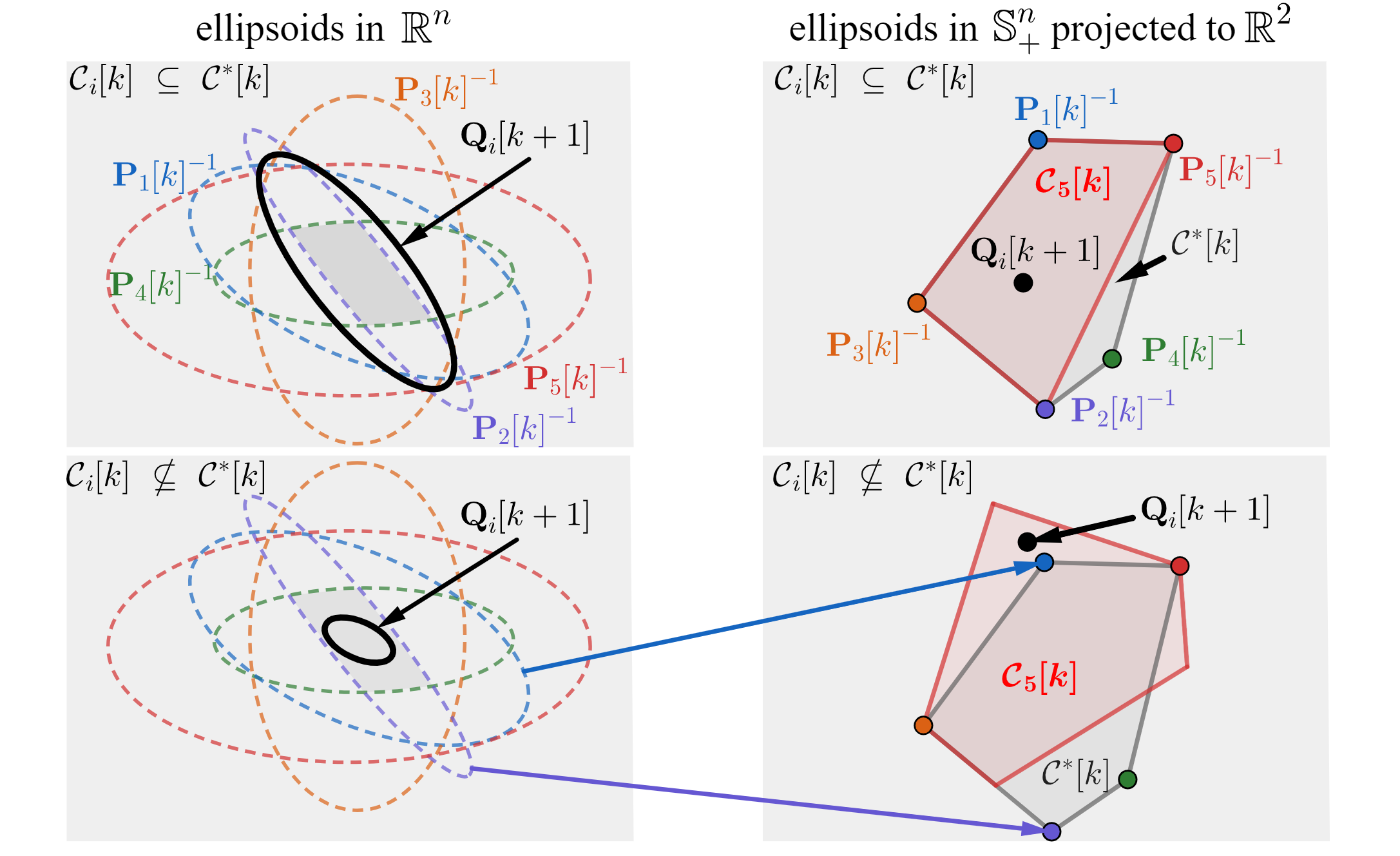}
    \caption{An example of the importance of \textit{robustness} (Theorem~\ref{th:main}) in our problem. Throughout the paper, we combine the representation of the ellipsoids in $\mathbb{R}^n$ (left column) and in $\mathbb{S}^n_{+}$ projected to $\mathbb{R}^2$ (right column). The latter helps understanding the relationships between the input ellipsoids $\mathbf{P}_i[k]^{-1}$, their local outer L\"owner-John solutions $\mathbf{Q}_i[k+1]$ and the global optimum $\mathbf{Q}^*[k]$.
    When $\mathcal{C}_i[k] \subseteq \mathcal{C}^*[k]$ (top), the local estimate $\mathbf{Q}_i[k+1]$ is always contained in both the local and global feasible set, which implies that the node is able to reconstruct an approximation of $\mathbf{Q}^*[k]$ and the global intersection of ellipsoids, even if $f(\mathbf{Q}_i[k-1]) < f(\mathbf{Q}^*[k])$. On the other hand, when $\mathcal{C}_i[k] \nsubseteq \mathcal{C}^*[k]$ (bottom), the local optimum estimate $\mathbf{Q}_i[k+1]$ might be out of the global feasible set. As shown on the right, if $f(\mathbf{Q}_i[k]) < f(\mathbf{Q}^*[k+1])$, then the node is not able to track the outer approximation of the intersection because the current estimate $\mathbf{Q}_i[k]$ is smaller than $\mathbf{Q}^*[k+1]$ in the sense of $f$. Moreover, $\mathbf{Q}_i[k]$ will propagate across the network, blocking all nodes from correctly estimating $\mathbf{Q}^*[k+1]$ at subsequent time steps.}
    \label{fig:robustness}
\end{figure}

We prove Theorem \ref{th:main} in Section \ref{subsec:dynamic}, after introducing auxiliary results in Sections \ref{subsec:aux} and \ref{subsec:constant}. Before that, we describe the main properties expressed in the theorem. The intuition behind \textit{robustness} is that, given arbitrary input matrices $\{\mathbf{P}_i[k]\}_{i=1}^{\mathsf{N}}$ under Assumption~\ref{as:bounded}, the feasible set of the local optimization problem at each node is always contained inside the feasible set of the global optimization problem; this is important because, if \textit{robustness} did not hold, then the optimum of the local optimization problem at node $i$ might be such that $\mathbf{Q}_i[k] \notin \mathcal{C}^*[k]$ and, therefore, it would not be possible to converge arbitrarily close to the global optimum. For instance, Fig.~\ref{fig:robustness} visualizes a case where, if $f(\mathbf{Q}_i[k-1]) < f(\mathbf{Q}^*[k])$ $\forall k>K$ and \textit{robustness} did not hold, then $\mathbf{Q}_i[k] = \mathbf{Q}_i[k-1]$ $\forall k>K$, which means that the estimate at node $i$ would get stuck in a potentially unfeasible point forever. Thus, the proposed algorithm is robust against changes in the input ellipsoids, being able to converge to the global optimum always. Second, \textit{boundedness} implies that, irrespective of the changes in the input ellipsoids, the estimate of the global optimum at each node never escapes to infinity. 
Third, \textit{convergence} means that our algorithm converges in finite time $K$ to a region around the optimum of the global problem, where the size of the region depends on how fast the input ellipsoids vary with time: in particular, if the input ellipsoids are constant, then $\delta = 0$ and the global optimum is perfectly recovered; besides, the estimates remain inside that region for $k>K$.

\begin{remark}
    In many applications, the inputs $\{\mathbf{P}_i[k]\}_{i=1}^{\mathsf{N}}$ come from the discretization of the continuous-time dynamics of $\{\mathbf{P}_i(t)\}_{i=1}^{\mathsf{N}}$. In this case, the designer can choose the sampling step sufficiently small, such that Assumption \ref{as:bounded} holds for \mbox{$\underline{\theta} \simeq \bar{\theta} \simeq 1$} and a desired accuracy $\delta$ is achieved.
\end{remark}


\begin{remark}\label{remark:adaptive}
    The tracking error depends on the rate of change of the input ellipsoids. This suggests an adaptive scheme over parameter $\bar{\theta}$. One can follow similar procedures to those found in \cite{lippi2022adaptive, Deplano2022Dynamic} but adapted to positive definite matrices. The rate of change of local input matrices at instant $k$ is approximately given by \mbox{$\mathbf{P}_i[k-1]\mathbf{P}_i[k]^{-1}$}. Then, in practice, one can choose $\bar{\theta}_i[k] = \kappa \mathbf{P}_i[k-1]\mathbf{P}_i[k]^{-1}$ with $\kappa > 0$ in place of $\bar{\theta}$ in \eqref{eq:algo}. 
\end{remark}

After discussing the proposed algorithm and its main properties, the next section is devoted to prove Theorem~\ref{th:main}. 

\section{Convergence analysis}\label{sec:static}

To prove Theorem~\ref{th:main}, we first present some auxiliary results (Sec.~\ref{subsec:aux}) helpful for the main proofs. After that, we prove the theorem for the static case, i.e., $\mathbf{P}_i[k] = \mathbf{P}_i$ $\forall i,k$ (Sec. \ref{subsec:constant}). This intermediate step provides the tools and insights to prove Theorem~\ref{th:main} in Section \ref{subsec:dynamic}.

\subsection{Auxiliary results}\label{subsec:aux}
First, we characterize some properties of general closed convex sets that are useful for the subsequent study of the properties of the feasible sets $\mathcal{C}_i[k]$ and $\mathcal{C}^*[k]$.

\begin{lemma}\label{le:char}
Let $\mathcal{C}_1, \mathcal{C}_2$ two closed and convex sets such that $\mathcal{C}_1\subseteq\mathcal{C}_2\subseteq\mathbb{S}_+^n$. Moreover, denote by
$$
\begin{aligned}
\mf{Q}_{1} = \argmin_{\mf{Q}\in\mathcal{C}_1} f(\mf{Q}) \qquad \text{and} \qquad
\mf{Q}_{2} = \argmin_{\mf{Q}\in\mathcal{C}_2} f(\mf{Q}).
\end{aligned}
$$
Then, the following holds:
\begin{enumerate}
    \item If $\mf{Q}_{2}\in\mathcal{C}_1$ then $\mf{Q}_1= \mf{Q}_2$. 
    \item If $\mf{Q}_2\notin\mathcal{C}_1$ then $\mf{Q}_1\in\mfs{rebdr}(\mathcal{C}_1)$.
\end{enumerate}
\end{lemma}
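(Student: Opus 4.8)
The plan is to derive both parts from strict convexity of $f$ (Assumption~\ref{as:unbounded}-2)) together with the nesting $\mathcal{C}_1\subseteq\mathcal{C}_2$. First I would record the two standing facts I reuse throughout: since $f$ is strictly convex and continuous and the feasible sets are closed, each minimizer $\mf{Q}_1,\mf{Q}_2$ is \emph{unique} whenever it exists, and existence over a bounded set is guaranteed by Assumption~\ref{as:unbounded}-4); in our application the relevant sets are compact, so existence is automatic. Uniqueness is the lever for part 1), and the first-order/stationarity machinery is the lever for part 2).

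For part 1) the argument is purely order-theoretic. Because $\mathcal{C}_1\subseteq\mathcal{C}_2$, the minimum of $f$ over the larger set cannot exceed the minimum over the smaller one, i.e. $f(\mf{Q}_2)\leq f(\mf{Q}_1)$. Conversely, the hypothesis $\mf{Q}_2\in\mathcal{C}_1$ makes $\mf{Q}_2$ a feasible competitor in the problem defining $\mf{Q}_1$, so $f(\mf{Q}_1)\leq f(\mf{Q}_2)$. Hence $f(\mf{Q}_1)=f(\mf{Q}_2)$ with both points in $\mathcal{C}_1$, and uniqueness of the minimizer over $\mathcal{C}_1$ forces $\mf{Q}_1=\mf{Q}_2$.

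For part 2) I would argue by contradiction, assuming $\mf{Q}_1\in\mfs{relint}(\mathcal{C}_1)$. Since $\mf{Q}_1$ minimizes the convex, continuously differentiable $f$ over $\mathcal{C}_1$ while lying in its relative interior, the first-order optimality condition degenerates: the gradient $\nabla f(\mf{Q}_1)$ must be orthogonal to the affine hull of $\mathcal{C}_1$, so that $\mf{Q}_1$ in fact minimizes $f$ over all of $\mathrm{aff}(\mathcal{C}_1)$. In the setting relevant to \eqref{eq:prob}--\eqref{eq:algo} the feasible sets are full-dimensional in $\mathbb{S}^n$ (they are order intervals capped below by $\mf{0}$), so $\mfs{relint}$ coincides with the interior, $\mathrm{aff}(\mathcal{C}_1)=\mathbb{S}^n$, and the orthogonality reduces to $\nabla f(\mf{Q}_1)=\mf{0}$. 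A stationary point of a convex $C^1$ function is a global minimizer, so $\mf{Q}_1$ would minimize $f$ over all of $\mathbb{S}_+^n$, contradicting Assumption~\ref{as:unbounded}-3) (absence of a lower bound and hence of an unconstrained minimizer). Therefore $\mf{Q}_1\notin\mfs{relint}(\mathcal{C}_1)$, i.e. $\mf{Q}_1\in\mfs{rebdr}(\mathcal{C}_1)$.

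The step I expect to be the crux is this last one: converting \emph{minimizer in the relative interior} into a genuine contradiction. The clean reduction to $\nabla f(\mf{Q}_1)=\mf{0}$ hinges on $\mathcal{C}_1$ being full-dimensional; for fully general lower-dimensional sets the gradient is only forced to vanish \emph{within} $\mathrm{aff}(\mathcal{C}_1)$, and to rule this out one would additionally need the hypothesis $\mf{Q}_2\notin\mathcal{C}_1$, e.g. by examining the strictly convex restriction $\phi(t)=f(\mf{Q}_1+t(\mf{Q}_2-\mf{Q}_1))$ on $[0,1]$, whose minimizer at $t=1$ forces $\langle\nabla f(\mf{Q}_1),\mf{Q}_2-\mf{Q}_1\rangle<0$. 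I would therefore make the full-dimensionality of the feasible sets explicit, since it is precisely what collapses the relative-interior case directly onto Assumption~\ref{as:unbounded}-3).
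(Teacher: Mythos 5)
Your part 1) is the paper's argument, just spelled out more fully: uniqueness of the minimizer from strict convexity plus the two-way inequality $f(\mf{Q}_2)\leq f(\mf{Q}_1)\leq f(\mf{Q}_2)$. For part 2) you reach the contradiction by a slightly different door: the paper argues that a relative-interior minimizer would be the global optimum "in any case," hence equal to $\mf{Q}_2$ by uniqueness, contradicting $\mf{Q}_2\notin\mathcal{C}_1$; you instead push stationarity to $\nabla f(\mf{Q}_1)=\mf{0}$ and contradict Assumption~\ref{as:unbounded}-3) directly, never using the hypothesis $\mf{Q}_2\notin\mathcal{C}_1$. Both versions hinge on the same delicate step --- promoting a relative-interior local minimizer to a minimizer over a strictly larger set --- and you are right that this only goes through when $\mathcal{C}_1$ is full-dimensional (as the feasible sets of \eqref{eq:prob}--\eqref{eq:algo} are); the paper's "ball in $\mfs{relint}(\mathcal{C}_1)$" silently makes the same assumption. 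Your route is in fact closer to how the lemma is later invoked, since the paper cites Assumption~\ref{as:unbounded}-3) together with Lemma~\ref{le:char}-2) when extracting $\mf{Q}_i[k]\in\mfs{rebdr}(\mathcal{C}_i[k])$. One caution on your closing remark: the suggested patch for genuinely lower-dimensional $\mathcal{C}_1$ does not work, because $\langle\nabla f(\mf{Q}_1),\mf{Q}_2-\mf{Q}_1\rangle<0$ is compatible with $\nabla f(\mf{Q}_1)$ being orthogonal to $\mathrm{aff}(\mathcal{C}_1)$ when $\mf{Q}_2-\mf{Q}_1$ does not lie in that affine hull; e.g., with $f=-\log\det$, the segment $\mathcal{C}_1=\{\mathrm{diag}(x,2-x):x\in[1/2,3/2]\}$ inside a full-dimensional $\mathcal{C}_2$ has its minimizer at $\mf{I}\in\mfs{relint}(\mathcal{C}_1)$ even though $\mf{Q}_2\notin\mathcal{C}_1$. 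So the full-dimensionality you propose to make explicit is not merely convenient --- it is necessary for the statement as written, and adding it as a hypothesis (or restricting to the order-interval sets actually used) would tighten both your proof and the paper's.
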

\begin{proof}
    For the first item, since $f(\bullet)$ {is strictly convex} and $\mathcal{C}_1,\mathcal{C}_2$ are convex, then $\mf{Q}_1,\mf{Q}_2$ must be unique. Hence, if $\mf{Q}_2\in\mathcal{C}_1$, then $\mf{Q}_1= \mf{Q}_2$ must follow. For the second item, $\mathcal{C}_1$ is closed, so the relative boundary $\mfs{rebdr}(\mathcal{C}_1)$ exists and contains all its relative boundary points. The next steps of the proof follow by contradiction. Assume that $\mf{Q}_1\in\mfs{relint}(\mathcal{C}_1)$. Then, there exist a ball $\mathcal{B}\in\mfs{relint}(\mathcal{C}_1)$ centered at $\mf{Q}_1$ such that $f(\mf{Q}_1)\leq f(\mf{Q}), \forall \mf{Q}\in\mathcal{B}$. Thus, $\mf{Q}_1$ is a local optimum of $f(\bullet)$, and, as a result of convexity of $f(\bullet)$ and $\mathcal{C}_1,\mathcal{C}_2$, it is the global optimum of $f(\bullet)$ in any case. By assumption of the lemma, $\mf{Q}_1\in\mathcal{C}_1\subseteq\mathcal{C}_2$ and thus $\mf{Q}_1=\mf{Q}_2$. However, by assumption of the second item, $\mf{Q}_2\notin\mathcal{C}_1$, which leads to the contradiction $\mf{Q}_1\notin\mathcal{C}_1$.
\end{proof}

The next lemma {demonstrates} properties of the feasible set $\mathcal{C}_i[k]$ and $\mathcal{C}^*[k]$ that are important to study the relationships between the centralized and distributed optimization problems~\eqref{eq:prob} and \eqref{eq:algo}.   
\begin{lemma}
\label{le:properties}
    Let Assumptions \ref{as:bounded} and \ref{as:unbounded} hold. Then, the following statements are true:
    \begin{enumerate}
        \item\label{it:regular} The sets $\mathcal{C}_i[k], \mathcal{C}^*[k]$ are closed and convex for all \mbox{$k\in\{{1,2},\dots\}$} and $ i\in\mathcal{I}$.

        \item\label{it:extrema} Denote $\mf{Q}^*[k]=\min_{\mf{Q}\in\mathcal{C}^*[k]}f(\mf{Q})$ for arbitrary fixed \mbox{$k\in\{0,1,\dots,\}$}. Then, $\mf{Q}^*[k]\in\mfs{co}\{\mf{P}_i[k]^{-1}\}_{i=1}^\mfs{N}$.   
        \item\label{it:no:escapes} $
\mathcal{C}_i[k]\subseteq \mathcal{C}^*[k]
$
for all $k\in\{{1,2},\dots\}, i\in\mathcal{I}$.
    \end{enumerate}
\end{lemma}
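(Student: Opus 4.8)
The plan is to handle the three items in the order 1, 3, 2, since the optimality claim (item 2) is the delicate one and benefits from having the set structure of items 1 and 3 in hand. For item~1 I would read convexity straight off the multiplier description: given $\mf{Q}_1,\mf{Q}_2$ feasible for $\mathcal{C}^*[k]$ with multipliers $\lambda^{(1)},\lambda^{(2)}$, the point $c_1\mf{Q}_1+c_2\mf{Q}_2$ with $c_1+c_2=1$ is feasible with the combination $c_1\lambda^{(1)}+c_2\lambda^{(2)}$, because the box constraint $[0,1]$, the budget $\sum_j\lambda_j\le 1$, and the inequality $\mf{0}\preceq\mf{Q}\preceq\sum_j\lambda_j\mf{P}_j[k]^{-1}$ are all preserved under convex combination (the last using convexity of the semidefinite cone). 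For closedness I would write $\mathcal{C}^*[k]$ as the image of $\{(\mf{Q},\lambda):\lambda\in\Lambda,\ \mf{0}\preceq\mf{Q}\preceq\sum_j\lambda_j\mf{P}_j[k]^{-1}\}$ under the projection $(\mf{Q},\lambda)\mapsto\mf{Q}$, where $\Lambda=\{\lambda\in[0,1]^{\mfs{N}}:\sum_j\lambda_j\le 1\}$ is compact; this set is closed (a finite intersection of closed linear and semidefinite conditions) and projection along a compact factor is a closed map, so the image is closed. The identical argument covers $\mathcal{C}_i[k]$.

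For item~3 I would induct on $k$. The base case is immediate from $\mf{Q}_j[0]=\mf{P}_j[0]^{-1}\in\mfs{co}\{\mf{P}_l[0]^{-1}\}\subseteq\mathcal{C}^*[0]$. For the step, assume $\mf{Q}_j[k-1]\in\mathcal{C}^*[k-1]$, i.e. $\mf{Q}_j[k-1]\preceq\sum_l\beta_{jl}\mf{P}_l[k-1]^{-1}$ with $\sum_l\beta_{jl}\le 1$. Taking any $\mf{Q}\in\mathcal{C}_i[k]$ and inserting this bound into its defining inequality, I would convert past inputs into current ones through Assumption~\ref{as:bounded}, which gives $\mf{P}_l[k-1]^{-1}\preceq(\overline{p}/\underline{p})\mf{P}_l[k]^{-1}$; taking $\overline{\theta}\ge\overline{p}/\underline{p}$ makes the effective weight $\tfrac{1}{\overline{\theta}}\cdot\tfrac{\overline{p}}{\underline{p}}\le 1$, so the whole right-hand side collapses to a single current-time combination $\sum_l\mu_l\mf{P}_l[k]^{-1}$. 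The crux is the budget count: collecting the coefficient of each $\mf{P}_l[k]^{-1}$ gives $\sum_l\mu_l\le\lambda_{\mf{P}}^i+\sum_{j\in\mathcal{N}_i}\lambda_j^i\le 1$, hence $\mf{Q}\in\mathcal{C}^*[k]$, and since $\mf{Q}_i[k]\in\mathcal{C}_i[k]$ the induction closes.

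For item~2, I would first note $\mfs{co}\{\mf{P}_j[k]^{-1}\}\subseteq\mathcal{C}^*[k]$ (take $\lambda$ equal to the convex weights), and that by Assumption~\ref{as:bounded} every $\mf{Q}\in\mathcal{C}^*[k]$ obeys $\mf{0}\preceq\mf{Q}\preceq(1/\underline{p})\mf{I}$, so $\mathcal{C}^*[k]$ is bounded; with item~1 it is compact, and Assumption~\ref{as:unbounded}-4) together with continuity yields a finite, and by strict convexity unique, minimizer $\mf{Q}^*[k]$. To place $\mf{Q}^*[k]$ in the hull I would dominate an arbitrary feasible $\mf{Q}$, with multipliers $\lambda$ of total weight $s=\sum_j\lambda_j\le 1$, by a hull point: topping the weights up to $\mu$ with $\mu_j\ge\lambda_j$ and $\sum_j\mu_j=1$ gives $\hat{\mf{Q}}=\sum_j\mu_j\mf{P}_j[k]^{-1}\in\mfs{co}\{\mf{P}_j[k]^{-1}\}$ with $\hat{\mf{Q}}\succeq\sum_j\lambda_j\mf{P}_j[k]^{-1}\succeq\mf{Q}$. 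If $f(\hat{\mf{Q}})\le f(\mf{Q})$, then $\min_{\mathcal{C}^*[k]}f=\min_{\mfs{co}\{\mf{P}_j[k]^{-1}\}}f$ and uniqueness forces the minimizer into the hull.

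The hard part will be precisely the inequality $f(\hat{\mf{Q}})\le f(\mf{Q})$ for $\hat{\mf{Q}}\succeq\mf{Q}$, i.e. showing that the optimum activates the upper inequality with full budget rather than lying in the interior or on the singular part of the boundary. This is where the character of the admissible objectives enters: the information/volume measures of Assumption~\ref{as:unbounded} (e.g. $\log\det((\bullet)^{-1})$, $\tr((\bullet)^{-1})$) are nonincreasing in the L\"owner order, and the no-lower-bound property~\ref{as:unbounded}-3) is what rules out an interior minimizer, by the same local-implies-global argument used in Lemma~\ref{le:char}. I would make this rigorous through the SDP optimality conditions: complementary slackness on the upper inequality with a definite dual matrix (definiteness forced by $\nabla f(\mf{Q}^*[k])\ne\mf{0}$, a consequence of~\ref{as:unbounded}-3)) gives $\mf{Q}^*[k]=\sum_j\lambda_j^*\mf{P}_j[k]^{-1}$, while positivity of the budget multiplier gives $\sum_j\lambda_j^*=1$. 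Excluding a minimizer on the rank-deficient lower boundary is the single step I expect to demand the most care.
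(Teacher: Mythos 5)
Your items 1) and 3) are correct and, for item 3), you follow essentially the same induction as the paper: assume $\mf{Q}_j[k-1]\preceq\sum_\ell\beta_\ell^j\mf{P}_\ell[k-1]^{-1}$, substitute into the constraint of $\mathcal{C}_i[k]$, and verify the budget $\lambda_\mf{P}^i+\sum_{j\in\mathcal{N}_i}\lambda_j^i\sum_\ell\beta_\ell^j\leq 1$. The one divergence is how you pass from $\mf{P}_\ell[k-1]^{-1}$ to $\mf{P}_\ell[k]^{-1}$: you invoke the global bounds of Assumption \ref{as:bounded} and require $\overline{\theta}\geq\overline{p}/\underline{p}$, whereas the paper uses the per-step rate condition $\mf{P}_\ell[k]\preceq\overline{\theta}\mf{P}_\ell[k-1]$ from item 3) of Theorem \ref{th:main}, which gives $\mf{P}_\ell[k-1]^{-1}\preceq\overline{\theta}\mf{P}_\ell[k]^{-1}$ directly. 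Your condition is self-contained but far more conservative: $\overline{p}/\underline{p}$ can be large, while the convergence analysis needs $\overline{\theta}$ close to $1$ to obtain a small tracking error $\delta$, so the paper's weaker requirement is the one compatible with the rest of the argument. Your item 1) is fine and actually more explicit than the paper's one-line dismissal.

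For item 2) there is a genuine gap, and it sits exactly where you flagged it. Your route needs $f(\hat{\mf{Q}})\leq f(\mf{Q})$ whenever $\hat{\mf{Q}}\succeq\mf{Q}$, i.e.\ monotonicity of $f$ with respect to the L\"owner order. That property holds for $\log\det((\bullet)^{-1})$ and $\tr((\bullet)^{-1})$ but is \emph{not} among the four conditions of Assumption \ref{as:unbounded}: smoothness, strict convexity, unboundedness below, and boundedness on compact sets do not force $f$ to decrease along every positive semidefinite direction, only along some recession direction. The KKT patch you sketch inherits the same problem: complementary slackness yields $\mf{Q}^*[k]=\sum_j\lambda_j^*\mf{P}_j[k]^{-1}$ with $\sum_j\lambda_j^*=1$ only if the dual matrix of the upper L\"owner inequality is positive definite and the budget multiplier is strictly positive, and $\nabla f(\mf{Q}^*[k])\neq\mf{0}$ alone does not give you that; you would need $-\nabla f(\mf{Q}^*[k])\succ\mf{0}$. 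The paper takes a different and shorter route: it applies Lemma \ref{le:char} with $\mathcal{C}_1=\mathcal{C}^*[k]$ and $\mathcal{C}_2=\mathbb{S}_+^n$, uses Assumption \ref{as:unbounded}-3) to conclude the unconstrained minimizer does not exist, hence $\mf{Q}^*[k]\in\mfs{rebdr}(\mathcal{C}^*[k])$, and then asserts that relative-boundary points are convex combinations of the $\mf{P}_i[k]^{-1}$. That last assertion is itself terse (the boundary also contains rank-deficient points and points with $\sum_j\lambda_j<1$), so the step you identified as delicate is delicate in the paper too; but if you want to stay within the stated assumptions, the boundary-placement argument via Lemma \ref{le:char} is the intended mechanism, and you would still owe an argument excluding the lower (singular) part of the boundary rather than appealing to L\"owner monotonicity.
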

\begin{proof}
    For item \ref{it:regular}), the result is straightforward since the sets come from the standard semi-definite programs defined in~\eqref{eq:prob}-\eqref{eq:algo}. For item \ref{it:extrema}), let $\mathcal{C}_1=\mathcal{C}^*[k], \mathcal{C}_2=\mathbb{S}_+^n$. Hence, item 3) of Assumption \ref{as:unbounded} implies that $\mf{Q}_2$ from Lemma \ref{le:char} does not exist and as a result $\mf{Q}_2\notin\mathcal{C}_1$ and $\mf{Q}^*[k]=\mf{Q}_1\in\mfs{rebdr}(\mathcal{C}_1)$. Note that points in the relative boundary of $\mathcal{C}_1$ are convex combinations of $\{\mf{P}_i[k]^{-1}\}_{i=1}^\mfs{N}$ from which the result follows.

    For item \ref{it:no:escapes}) we proceed by induction. As induction base, set an arbitrary agent $i\in\mathcal{I}$ {and define $\mathcal{C}_i[0]$ only for $k=0$ as $\mathcal{C}_i[0]:=\{\mf{P}_i[{0}]^{-1}\}$, so that the initial condition comply $\mf{Q}_i[0] = \mf{P}_i^{-1}[0]\in\mathcal{C}_i[0]$}. Now, referring to the centralized optimization problem~\eqref{eq:prob}, let $\lambda_j=1$ if $j=i$ and $\lambda_j=0$ otherwise. Hence, $\mf{Q}_i[0]=\sum_{j=1}^\mfs{N}\lambda_j\mf{P}_j[{0}]^{-1}\in\mathcal{C}^*[{0}]$. Henceforth, $\mathcal{C}_i[0]\subseteq \mathcal{C}^*[0]$.


Now, assume that $\mathcal{C}_i[k-1]\subseteq \mathcal{C}^*[k-1]$ for some \mbox{$k\in\{1,2,\dots\}$}. Then, it follows $\mf{Q}_i[k-1]\in\mathcal{C}^*[k-1]$, meaning that there exists $\beta_1^j,\dots,\beta_\mfs{N}^j\in[0,1]$ with \mbox{$\sum_{\ell=1}^\mfs{N}\beta_\ell^j\leq 1$} and
$$
\mf{0}\preceq \mf{Q}_j[k-1] \preceq \sum_{\ell=1}^\mfs{N}\beta_\ell^j\mf{P}_\ell[k-1]^{-1}\preceq \bar{\theta} \sum_{\ell=1}^\mfs{N}\beta_\ell^j\mf{P}_\ell[k]^{-1}
$$
For an arbitrary $\mf{Q}\in\mathcal{C}_i[k], {k=1,2,\dots}$ it follows that
$$
\begin{aligned}
\mf{Q}&\preceq \lambda_\mf{P}^i\mf{P}_i[k]^{-1} + \frac{1}{\bar{\theta}}\sum_{j\in\mathcal{N}_i} \lambda_j^i \mf{Q}_j[k-1] \\
&\preceq \lambda_\mf{P}^i\mf{P}_i[k]^{-1} + \sum_{j\in\mathcal{N}_i}\lambda_j^i\sum_{\ell=1}^\mfs{N}\beta_\ell^j\mf{P}_\ell[k]^{-1}
\end{aligned}
$$
Now, consider
$$
\lambda_\ell = \begin{cases}
\lambda_\mf{P}^i + \sum_{j\in\mathcal{N}_i}\lambda_j^i\beta^j_l& \text{if }\ell=i \\
\sum_{j\in\mathcal{N}_i}\lambda_j^i\beta^j_\ell & \text{otherwise}
\end{cases}
$$
Then, it follows that
$$
\begin{aligned}
\sum_{\ell=1}^\mfs{N}\lambda_\ell &= \lambda_\mf{P}^i + \sum_{j\in\mathcal{N}_i}\lambda_j^i{\beta_i^j} + \sum_{l\neq i}\sum_{j\in\mathcal{N}_i}\lambda_j^i\beta^j_\ell = \\
&=\lambda_\mf{P}^i + \sum_{j\in\mathcal{N}_i}\lambda_j^i\sum_{\ell=1}^\mfs{N}\beta^j_\ell 
\leq \lambda_\mf{P}^i + \sum_{j\in\mathcal{N}_i}\lambda_j^i\leq 1
\end{aligned}
$$
and
$$
\begin{aligned}
&\sum_{\ell=1}^\mfs{N} \lambda_\ell \mf{P}_\ell[k]^{-1} =\\&\left(\kern -0.1cm\lambda_\mf{P}^i + \sum_{j\in\mathcal{N}_i}\lambda_j^i\beta^j_i\right)\mf{P}_i[k]^{-1}+ \sum_{l\neq i} \left(\sum_{j\in\mathcal{N}_i}\lambda_j^i\beta^j_\ell\right)\mf{P}_\ell[k]^{-1} \\
&=\lambda_\mf{P}^i\mf{P}_i[k] + \sum_{\ell=1}^\mfs{N} \left(\sum_{j\in\mathcal{N}_i}\lambda_j^i\beta^j_\ell\right)\mf{P}_\ell[k]^{-1} \succeq \mf{Q}
\end{aligned}
$$
Therefore, the feasible solution set for agent $i$ of problem~\eqref{eq:algo} is a particular case of the feasible solution set of problem~\eqref{eq:prob}, $\mf{Q}\in\mathcal{C}^*[k]$ and, as a result, $\mathcal{C}_i[k]\subseteq\mathcal{C}^*[k]$.
\end{proof}

\subsection{Constant inputs}\label{subsec:constant}
In this section we focus on the static case, i.e., the input ellipsoids do not change with time. Henceforth, it is assumed that Assumption \ref{as:bounded} holds with \mbox{$\underline{\theta}={\overline{\theta}}=1$}, meaning that the inputs $\mf{P}_i[k]$ are constant.
\begin{lemma}
\label{le:constant:convergence}
Let Assumptions \ref{as:bounded} and \ref{as:unbounded} hold and $\underline{\theta}={\overline{\theta}}=1$. Then, 
\begin{equation}
\label{eq:decrease}
f(\mf{Q}_i[k])\leq f(\mf{Q}_i[k-1]).
\end{equation} 
for all $i\in\mathcal{I}, {k=1,2,\dots}$. Moreover, there exists ${f_1,\dots,f_\mfs{N}}\in\mathbb{R}$ such that $\lim_{k\to \infty}f(\mf{Q}_i[k])={f_i}$.
\end{lemma}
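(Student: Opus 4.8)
The plan is to prove the two assertions separately and then combine them. First I would establish the monotone decrease \eqref{eq:decrease} by showing that the previous iterate is always feasible for the current local program, i.e. $\mf{Q}_i[k-1]\in\mathcal{C}_i[k]$. The key observation is that $i\in\mathcal{N}_i$ and, in the static regime, $\overline{\theta}=1$, so choosing the multipliers $\lambda_i^i=1$ and all the others ($\lambda_\mf{P}^i$ and $\lambda_j^i$ for $j\neq i$) equal to zero makes the coupling upper bound in \eqref{eq:algo} equal to $\mf{Q}_i[k-1]$ exactly, while the simplex constraint $\lambda_\mf{P}^i+\sum_{j\in\mathcal{N}_i}\lambda_j^i=1\leq 1$ is satisfied. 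Since $\mf{Q}_i[k-1]\in\mathbb{S}_+^n$, this certificate places $\mf{Q}_i[k-1]$ in $\mathcal{C}_i[k]$ (for the base case $k=1$ one uses $\mf{Q}_i[0]=\mf{P}_i^{-1}$ together with the same certificate, or equivalently $\lambda_\mf{P}^i=1$). Because $\mf{Q}_i[k]$ minimizes $f$ over $\mathcal{C}_i[k]$, feasibility of $\mf{Q}_i[k-1]$ immediately yields $f(\mf{Q}_i[k])\leq f(\mf{Q}_i[k-1])$. I want to stress that the hypothesis $\overline{\theta}=1$ is essential at this step: for $\overline{\theta}>1$ the same multipliers only certify $\mf{Q}\preceq(1/\overline{\theta})\mf{Q}_i[k-1]$, which does not contain $\mf{Q}_i[k-1]$ unless $\mf{Q}_i[k-1]=\mf{0}$.

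Next I would show the sequence $\{f(\mf{Q}_i[k])\}$ is bounded from below. By Lemma~\ref{le:properties}, item~\ref{it:no:escapes}), we have $\mathcal{C}_i[k]\subseteq\mathcal{C}^*[k]$, and in the static case $\mathcal{C}^*[k]=\mathcal{C}^*$ is a fixed set. Using Assumption~\ref{as:bounded}, the bound $\underline{p}\mf{I}\preceq\mf{P}_j$ gives $\mf{P}_j^{-1}\preceq(1/\underline{p})\mf{I}$ for every $j$, so the defining inequality $\mf{Q}\preceq\sum_{j=1}^{\mfs{N}}\lambda_j\mf{P}_j^{-1}$ of \eqref{eq:prob} together with $\sum_{j=1}^{\mfs{N}}\lambda_j\leq 1$ forces $\mf{0}\preceq\mf{Q}\preceq(1/\underline{p})\mf{I}$ for all $\mf{Q}\in\mathcal{C}^*$. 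Hence $\mathcal{C}^*$ is closed and convex (Lemma~\ref{le:properties}, item~\ref{it:regular})) and bounded, so Assumption~\ref{as:unbounded}, item~4), guarantees $\inf_{\mf{Q}\in\mathcal{C}^*}f(\mf{Q})\in\mathbb{R}$. Since $\mf{Q}_i[k]\in\mathcal{C}_i[k]\subseteq\mathcal{C}^*$ for every $k$, the sequence $f(\mf{Q}_i[k])$ is bounded below by this finite infimum.

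Finally, combining the two facts, $\{f(\mf{Q}_i[k])\}_{k\geq 1}$ is a non-increasing real sequence bounded from below, so by the monotone convergence theorem it converges to some limit $f_i\in\mathbb{R}$, which is exactly the claim. The argument uses no heavy machinery; the only points that demand care are identifying the feasibility certificate that yields $\mf{Q}_i[k-1]\in\mathcal{C}_i[k]$ (and, in particular, recognizing why the static hypothesis $\overline{\theta}=1$ is precisely what makes the previous estimate self-feasible), and verifying the uniform eigenvalue bound on $\mathcal{C}^*$ that triggers Assumption~\ref{as:unbounded}, item~4). I expect the former to be the main conceptual step, since the entire monotonicity of the scheme hinges on the self-feasibility of the previous iterate.
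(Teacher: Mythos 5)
Your proof is correct and follows essentially the same route as the paper: the same feasibility certificate ($\lambda_i^i=1$, all other multipliers zero, relying on $\overline{\theta}=1$) to show $\mf{Q}_i[k-1]\in\mathcal{C}_i[k]$, then lower-boundedness of $f$ over $\mathcal{C}^*\supseteq\mathcal{C}_i[k]$ via Assumption~\ref{as:unbounded}-4) and monotone convergence. Your explicit verification that $\mathcal{C}^*$ is bounded (via $\mf{P}_j^{-1}\preceq(1/\underline{p})\mf{I}$), which item~4) of Assumption~\ref{as:unbounded} actually requires, is a small point of care that the paper's own proof leaves implicit.
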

\begin{proof}
Let $\lambda_j^i=1$ if $j=i$ and $\lambda_j^i=0$ otherwise, and $\lambda_\mf{P}^i=0$. Therefore,
$
\lambda_\mf{P}^i\mf{P}_i[k]^{-1} + \sum_{j\in\mathcal{N}_i} \lambda_j^i \mf{Q}_j[k-1] = \mf{Q}_i[k-1]
$
and
$
\lambda_\mf{P}^i+\sum_{j\in\mathcal{N}_i}\lambda_j^i = 1.
$
Hence, $\mf{Q}_i[k-1]\in\mathcal{C}_i[k]$. Therefore, \eqref{eq:decrease} follows by noting that both $\mf{Q}_i[k],\mf{Q}_i[k-1]\in\mathcal{C}_i[k]$ but that $\mf{Q}_i[k]$ is the minimizer of $f(\bullet)$ over $\mathcal{C}_i[k]$. 

For the last part of the lemma, note that $\mathcal{C}^*$ is closed and convex due to Lemma \ref{le:properties}-\ref{it:regular}) such that item 4) of Assumption \ref{as:unbounded} implies $f$ is lower bounded over $\mathcal{C}^*$. Moreover, since $\mathcal{C}_i{[k]}\subseteq\mathcal{C}^*$ then $f$ attains the same lower bound over $\mathcal{C}_i{[k]}$. Furthermore, combine \eqref{eq:decrease} with such lower bound to conclude that ${f_i}:=\lim_{k\to\infty}f(\mf{Q}_i[k])$ must exist {due to monotonicity in \eqref{eq:decrease}}. 
\end{proof}
\begin{figure*}
    \centering
    \includegraphics[width=0.7\textwidth]{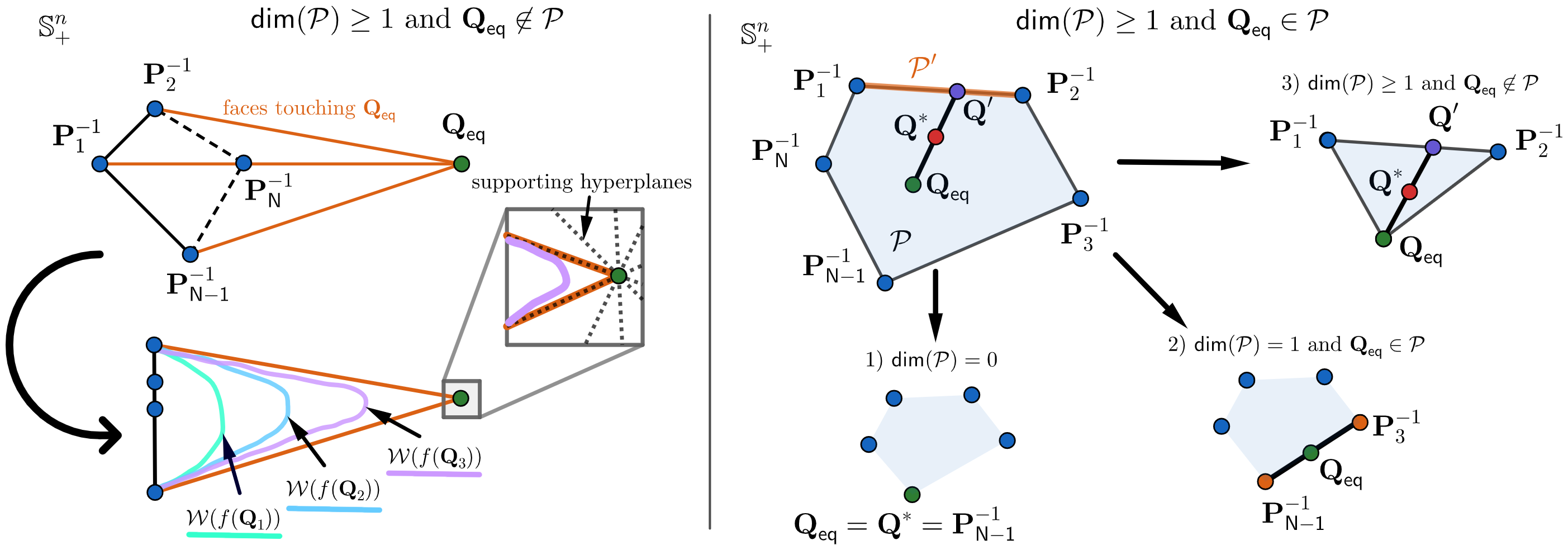}
    \caption{Illustrative visualization of the cases 3) and 4) of the proof of Lemma \ref{lem:global:optim}. The figure leverages the representation of ellipsoids in $\mathbb{S}^n_{+}$ projected to $\mathbb{R}^3$ and $\mathbb{R}^2$ introduced in Fig.~\ref{fig:robustness}.  }
    \label{fig:depiction_proof}
\end{figure*}



\begin{lemma}\label{lem:global:optim}
{Let Assumptions \ref{as:bounded} and \ref{as:unbounded} hold, and $\underline{\theta}={\overline{\theta}}=1$. Moreover, let $\mathcal{G}$ be connected. Then, $\lim_{k\to\infty}\mf{Q}_i[k]=\mf{Q}^*$ for all $i\in\mathcal{I}$ where $\mf{Q}^*$ is the (constant) optimum of \eqref{eq:prob}.}
\end{lemma}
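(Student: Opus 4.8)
The plan is to show that the monotone decrease of $f$ along the iterates forces all nodes to share a common limit value, that this value is attained at a single consensus matrix $\mf{Q}^\infty$, and finally that $\mf{Q}^\infty$ must coincide with the global minimizer $\mf{Q}^*$. First I would fix the ambient compact set: by Lemma \ref{le:properties}-\ref{it:no:escapes}) (with $\underline{\theta}=\overline{\theta}=1$) every iterate satisfies $\mf{Q}_i[k]\in\mathcal{C}^*$, and Assumption \ref{as:bounded} bounds this set, since any $\mf{Q}\in\mathcal{C}^*$ obeys $\mf{0}\preceq\mf{Q}\preceq\sum_j\lambda_j\mf{P}_j^{-1}\preceq(1/\underline{p})\mf{I}$; together with Lemma \ref{le:properties}-\ref{it:regular}) this makes $\mathcal{C}^*$ compact. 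Lemma \ref{le:constant:convergence} then gives $f(\mf{Q}_i[k])\downarrow f_i$ for each $i$. To equalize these limits across the network, I would note that selecting $\lambda_j^i=1$ and all other multipliers zero makes $\mf{Q}_j[k-1]$ feasible for node $i$, so that $f(\mf{Q}_i[k])\leq f(\mf{Q}_j[k-1])$ for every neighbor $j\in\mathcal{N}_i$ and every $k$; letting $k\to\infty$ yields $f_i\leq f_j$. Since $\mathcal{G}$ is undirected the reverse inequality also holds, and connectedness propagates the equality, giving a common value $f^\dagger:=f_i$ for all $i$.

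Second, and this is where I expect the main difficulty, I would upgrade agreement on values to agreement on matrices. Working in the product of compact sets, I extract a subsequence along which $\mf{Q}_i[k]\to A_i$ and $\mf{Q}_i[k-1]\to B_i$ for all $i$ (passing also to convergent multipliers on their compact simplex); by continuity of $f$ we have $f(A_i)=f(B_i)=f^\dagger$. The delicate step is to pass the per-node optimality to the limit: because $f$ is strictly convex and the budget matrices defining $\mathcal{C}_i[k]$ converge, the minimizer map is single-valued and continuous (a parametric-SDP stability argument), so $A_i$ minimizes $f$ over the limiting feasible set $\tilde{\mathcal{C}}_i$ built from the $B_j$. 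Taking $\lambda_j^i=1$ again shows $B_j\in\tilde{\mathcal{C}}_i$ with $f(B_j)=f^\dagger=\min_{\tilde{\mathcal{C}}_i}f$, whence uniqueness of the minimizer forces $A_i=B_j$ for every $j\in\mathcal{N}_i$, in particular $A_i=B_i$. Chaining this over edges and invoking connectedness yields a single consensus matrix $\mf{Q}^\infty$ with $A_i=B_i=\mf{Q}^\infty$ for all $i$. Establishing this feasible-set/argmin continuity rigorously, rather than merely the convergence of the values, is the crux of the argument.

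Finally, I would certify global optimality of $\mf{Q}^\infty$ by a first-order argument. Since $i\in\mathcal{N}_i$, for each node the segment $M_i(\alpha)=\alpha\mf{P}_i^{-1}+(1-\alpha)\mf{Q}^\infty$, $\alpha\in[0,1]$, is feasible for the limiting problem (budget equal to $M_i(\alpha)$), and $\mf{Q}^\infty=M_i(0)$ minimizes $f$ over it; differentiability (Assumption \ref{as:unbounded}-1)) then gives $\langle\nabla f(\mf{Q}^\infty),\mf{P}_i^{-1}-\mf{Q}^\infty\rangle\geq 0$ for every $i\in\mathcal{I}$. By Lemma \ref{le:properties}-\ref{it:extrema}), $\mf{Q}^*=\sum_\ell\mu_\ell\mf{P}_\ell^{-1}$ with $\mu_\ell\geq 0$ and $\sum_\ell\mu_\ell=1$, so convexity of $f$ yields $f(\mf{Q}^*)\geq f(\mf{Q}^\infty)+\sum_\ell\mu_\ell\langle\nabla f(\mf{Q}^\infty),\mf{P}_\ell^{-1}-\mf{Q}^\infty\rangle\geq f(\mf{Q}^\infty)$. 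Because $\mf{Q}^\infty\in\mathcal{C}^*$ we also have $f(\mf{Q}^\infty)\geq f(\mf{Q}^*)$, hence equality, and uniqueness of the minimizer over $\mathcal{C}^*$ gives $\mf{Q}^\infty=\mf{Q}^*$. As every convergent subsequence produces the same limit $\mf{Q}^*$ and the iterates live in the compact set $\mathcal{C}^*$, the full sequence converges, i.e. $\mf{Q}_i[k]\to\mf{Q}^*$ for all $i$.
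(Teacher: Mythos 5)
Your proposal is correct, and it reaches the conclusion by a genuinely different route than the paper in its decisive step. The first stage coincides with the paper's: both use Lemma \ref{le:constant:convergence}, the feasibility of $\mf{Q}_j[k-1]$ in $\mathcal{C}_i[k]$ (obtained by setting $\lambda_j^i=1$), and connectedness of $\mathcal{G}$ to force a common limit value of $f$ across the network, and both then invoke strict convexity to collapse the limit onto a single consensus matrix. (Both treatments are somewhat informal at the point where value convergence is upgraded to matrix convergence: the paper simply speaks of ``some $k$ in which equilibrium is attained,'' while you at least name the missing ingredient --- continuity of the argmin of a strictly convex program under perturbation of the budget matrices --- which is the honest technical debt of either version.) Where you genuinely diverge is in certifying that the consensus point equals $\mf{Q}^*$. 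The paper argues by contradiction through an elaborate geometric construction: it characterizes $\mfs{rebdr}(\mathcal{C}_i)$ at equilibrium as a star $\mathcal{S}$ of segments joining $\mf{Q}_{\mfs{eq}}$ to the $\mf{P}_i^{-1}$, runs a four-way case analysis on $\mfs{dim}(\mfs{co}\{\mf{P}_i^{-1}\})$ with a recursion over polytope faces, and derives a contradiction from the fact that a smooth $f$ cannot have a level set with two supporting hyperplanes at one point. You instead use a first-order variational inequality: since the segment $\alpha\mf{P}_i^{-1}+(1-\alpha)\mf{Q}^\infty$ is feasible for node $i$ and $\mf{Q}^\infty$ minimizes $f$ over its limiting feasible set, $\langle\nabla f(\mf{Q}^\infty),\mf{P}_i^{-1}-\mf{Q}^\infty\rangle\geq 0$ for every $i$; combined with Lemma \ref{le:properties}-\ref{it:extrema}), which places $\mf{Q}^*$ in $\mfs{co}\{\mf{P}_i^{-1}\}_{i=1}^{\mfs{N}}$, the gradient inequality gives $f(\mf{Q}^*)\geq f(\mf{Q}^\infty)$, while $\mf{Q}^\infty\in\mathcal{C}^*$ (robustness plus closedness) gives the reverse, and uniqueness of the minimizer closes the argument. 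Both routes consume the same hypotheses (smoothness, strict convexity, item \ref{it:extrema}) of Lemma \ref{le:properties}), but yours replaces the dimension-recursion and supporting-hyperplane machinery with a three-line convexity estimate, which is shorter, easier to verify, and less dependent on the low-dimensional geometric picture.
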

\begin{proof} 
\black{First,} we study the equilibrium of \eqref{eq:algo} as follows. Let $\ell = \argmin_{i\in\mathcal{I}} f_i^*$
with $f_i^*$ taken from Lemma  \ref{le:constant:convergence}. Moreover, let $\mathcal{N}_\ell^{r} = \{i\in\mathcal{I}: i\in\mathcal{N}_j, j\in\mathcal{N}_\ell^{r-1}\}$ and $\mathcal{N}_\ell^0 = \mathcal{N}_\ell$. This is, $\mathcal{N}_\ell^{r-1}$ contains the set of neighbors of neighbors of $\ell\in\mathcal{I}$ after $r$ hops. We now show that ${f_\ell=f_i}$ for all $i\in\mathcal{N}_\ell^r$ and all $r\in\{0,1,\dots\}$ by induction, for some $k$ in which equilibrium is attained. For the induction base, note that there exists ${\mf{Q}_\ell'}\in\mathbb{S}_+^n$ such that $f({\mf{Q}_\ell'})={f_\ell}\leq {f_i}\leq f(\mf{Q})$ for all $\mf{Q}\in\mathcal{C}_i[k]$ with $i\in\mathcal{N}_\ell$. Thus, \eqref{eq:algo} implies ${f_i}=f({\mf{Q}_\ell'})$ since ${\mf{Q}_\ell'}\in\mathcal{C}_i[k]$.  The induction step follows in the same way. Finally, since $\mathcal{G}$ is connected, then $\mathcal{N}_\ell^d = \mathcal{I}$ where $d$ is the diameter of $\mathcal{G}$. Therefore, ${f_i}={f_{\ell}}$ for all $i\in\mathcal{I}$. Finally, note that \eqref{eq:algo} {being a strictly convex program implies that $f(\mf{Q})={f_\ell}$ is attained for a single $\mf{Q}\in\mathbb{S}_+^n$ which we denote as $\mf{Q}_\mfs{eq}$.}

Moreover, item \ref{it:no:escapes}) of Lemma \ref{le:properties} implies that in equilibrium, $\mathcal{C}_i\subseteq\mathcal{C}^*$, which are constant $\mathcal{C}_i=\mathcal{C}_i[k], \mathcal{C}^*=\mathcal{C}^*[k]$ for $k\geq 1$. Now, assume $\mf{Q}^*\notin\mathcal{C}_i$ for all $i$ for a contradiction. Item 2) of Lemma \ref{le:char} implies $\mf{Q}_{\mfs{eq}}\in\mfs{rebdr}(\mathcal{C}_i)$. In equilibrium, it follows that
$$
\mfs{rebdr}(\mathcal{C}_i) =  \{\mf{Q}\in\mathbb{S}_+^n: \lambda_\mf{P}\mf{P}_i^{-1} + \lambda_i^{\mfs{eq}}\mf{Q}_{\mfs{eq}},\lambda_\mf{P}+\lambda_i^\mfs{eq}=1 \}
$$
since $\mf{Q}_i[k] = \mf{Q}_{\mfs{eq}}$ and with constant $\mf{P}_i=\mf{P}_i[k]$. This is, $\mf{Q}_{\mfs{eq}}$ is at the intersection of the $\mfs{N}$ lines $\{\mfs{rebdr}(\mathcal{C}_i)\}_{i\in\mathcal{I}}$. Denote the star $\mathcal{S}=\bigcup_{i\in\mathcal{I}}\{\mfs{rebdr}(\mathcal{C}_i)\}$ for the rest of the proof, and note that $\mf{Q}_\mfs{eq}=\argmin_{\mf{Q}\in\mathcal{S}}f(\mf{Q})$. On the other hand, note that, item \ref{it:extrema}) of Lemma \ref{le:properties} implies $\mf{Q}^*\in\mathcal{P}:=\mfs{co}\{\mf{P}_i^{-1}\}_{i=1}^\mfs{N}$. Denote with $\mfs{dim}(\mathcal{P})$ the dimension of the manifold $\mathcal{P}\subset \mathbb{S}_+^n$. Using these properties, we distinguish the following cases:

1) $\mfs{dim}(\mathcal{P})=0$, with $\mathcal{P}$ consisting of a single point $\mf{Q}^*=\mf{P}_1^{-1}=\dots=\mf{P}_\mfs{N}^{-1}$. However, note that from \eqref{eq:algo} it follows that $\mf{Q}_i[k]=\mf{Q}_\mfs{eq}=\mf{P}_i^{-1}{=\mf{Q}^*}$ {and thus $\mf{Q}^*\in\mathcal{C}_i$, which is a contradiction.}

2) $\mfs{dim}(\mathcal{P})=1$ and $\mf{Q}_\mfs{eq}\in\mathcal{P}$. In this case, $\mathcal{P}$ necessarily consists of a line segment from $\mf{P}_i^{-1}$ and $\mf{P}_j^{-1}$ with $i\neq j$. Note that in this case, the line segment must comply $\mathcal{P}=\mathcal{S}$ for the star $\mathcal{S}$. However, this imply $f(\mf{Q}_\mfs{eq})\leq f(\mf{Q}^{*})$. {Due to strong convexity of $f$, either  $f(\mf{Q}_\mfs{eq})< f(\mf{Q}^{*})$ which is impossible, or $\mf{Q}^*=\mf{Q}_\mfs{eq}=\mf{Q}_i[k]$ which leads to a contradiction similar to case 1)}.

3) $\mfs{dim}(\mathcal{P})\geq 1$ and $\mf{Q}_\mfs{eq}\notin\mathcal{P}$. In this case, denote with $\mathcal{H}=\mfs{co}(\mathcal{P}\cup\{\mf{Q}_\mfs{eq}\})$. Note that $\mathcal{H}$ is a polytope over the (possibly) lower dimensional space $\mfs{span}\{\mf{P}_1^{-1},\dots, \mf{P}_\mfs{N}^{-1}, \mf{Q}_\mfs{eq}\}\subset \mathbb{S}_+^n$, with $\mf{Q}_\mfs{eq}$ as one corner, with at least two faces touching it due to the dimension of $\mathcal{P}$. Let $\mathcal{W}(\alpha):=\{\mf{Q}\in\mathbb{S}_+^n: f(\mf{Q})\leq \alpha\}$ the level sets of $f$, which are convex for any $\alpha\in\mathbb{R}$. Moreover, note that $\mathcal{W}(f(\mf{Q}_\mfs{eq}))\subseteq \mathcal{C}^*\cap \mathcal{H}$. Otherwise, there would exist smaller $\alpha<f(\mf{Q}_\mfs{eq})$ such that $\mathcal{W}(f(\mf{Q}_\mfs{eq}))\cap \mathcal{S}\neq \varnothing$, which contradicts the fact that $f(\mf{Q}_\mfs{eq})\leq f(\mf{Q})$ for all $\mf{Q}\in\mathcal{S}$. Using the previous fact in combination with $\mf{Q}_\mfs{eq}\in\mathcal{W}(f(\mf{Q}_\mfs{eq}))$ hence, $\mathcal{W}(f(\mf{Q}_\mfs{eq}))$ has at least two supporting hyper-planes at $\mf{Q}_\mfs{eq}$, namely the faces of $\mathcal{H}$ meeting at that point. However,  $f$ is smooth by Assumption \ref{as:unbounded}, which imply that $\mathcal{W}(f(\mf{Q}_\mfs{eq}))$ must have a single supporting hyperplane at all points (see \cite[Theorems 23.3 and 25.1]{rockafellar1970}), {contradicting the previous fact that $\mathcal{W}(f(\mf{Q}_\mfs{eq}))$ has two supporting hyper-planes}.

4) $\mfs{dim}(\mathcal{P})> 1$ and $\mf{Q}_\mfs{eq}\in\mathcal{P}$. Set $\mf{Q}'\in\mfs{rebdr}(\mathcal{P})$ be the point such that $\mf{Q}^*\in\mathcal{P}$ is contained in the line segment between $\mf{Q}'$ and $\mf{Q}_\mfs{eq}$. Pick an arbitrary face $\mathcal{P}'\subseteq \mfs{rebdr}(\mathcal{P})$ of the polytope $\mathcal{P}$ with $\mf{Q}'\in\mathcal{P}'$, which is of lower dimension than $\mathcal{P}$ since $\mfs{dim}(\mfs{rebdr}{(\mathcal{P})})<\mfs{dim}(\mathcal{P})$. Hence, distinguish the same cases 1), 2), 3) and 4), with $\mathcal{P}$ replaced by the face $\mathcal{P}'$, $\mathcal{H}$ with $\mathcal{H}'=\mfs{co}(\mathcal{P}'\cup\{\mf{Q}_{\mfs{eq}}\})$ and noting that $\mf{Q}^*\in\mathcal{H}'$ allowing to follow the same reasoning, reaching a contradiction in cases 1), 2) and 3) directly. Case 4) is used recursively until other cases are reached, which always happens since dimension of $\mathcal{P}$ is decreased every recursion step.

Henceforth, a contradiction is reached in any case implying that $\mf{Q}^*\in\mathcal{C}_i$ for some $i\in\mathcal{I}$. Then, item \ref{it:regular}) of Lemma \ref{le:properties} allows the usage of item 1) of Lemma \ref{le:char}, which implies $\mf{Q}_i^*=\argmin_{\mf{Q}\in\mathcal{C}_i}f(\mf{Q})=\mf{Q}^*$, being $\mf{Q}_i^*=\mf{Q}_{\mfs{eq}}$ the unique equilibrium of \eqref{eq:algo}.
\end{proof}

\subsection{Dynamic inputs}
\label{subsec:dynamic}

\black{In this section}, we provide a proof for Theorem \ref{th:main} in the general case with dynamic inputs. 

 \textbf{Item 1)}: The result follows from item 3) of Lemma \ref{le:properties}.

 \textbf{Item 2)}: We proceed by induction. For the induction base, note that $\mf{Q}_i[0]=\mf{P}_i[0]^{-1}$ from which the bound follows directly by Assumption \ref{as:bounded}. Now, assume the bound for arbitrary time $k-1\in\{0,1,\dots\}$. Item 3) of Assumption \ref{as:unbounded} and item 2) of Lemma \ref{le:char} imply that $\mf{Q}_i[k]\in\mfs{rebdr}(\mathcal{C}_i[k])$. As a result

        $$
    \begin{aligned}
    \mf{Q}_i[k] \kern -0.1cm = \kern -0.1cm \lambda_\mf{P}^i\mf{P}_i[k]^{-1} \kern -0.1cm + \kern -0.1cm \frac{1}{\overline{\theta}} \kern -0.1cm \sum_{j\in\mathcal{N}_i} \kern -0.1cm \lambda_j^i\mf{Q}_j[k \kern -0.1cm - \kern -0.1cm 1] \kern -0.0cm \preceq \kern -0.0cm {\frac{1}{\underline{p}}}\bigg(\kern -0.1cm\lambda_\mf{P}^i \kern -0.1cm + \kern -0.1cm {\frac{1}{\overline{\theta}}} \kern -0.1cm \sum_{j\in\mathcal{N}_i} \kern -0.1cm \lambda_j^i\bigg)\mf{I} \kern -0.0cm\preceq\kern -0.0cm {\frac{1}{\underline{p}}}\mf{I}
    \end{aligned}
    $$
\textbf{Item 3)}: First, let $\varepsilon,\underline{\theta},\overline{\theta}$ be such that $[\underline{\theta},\overline{\theta}]\subseteq[1-\varepsilon,1+\varepsilon]$. {Note that if $\varepsilon=0$, then $\overline{\theta}=\underline{\theta}=1$ which corresponds to the static case. Hence, we expect that small $\delta>0$ will lead to a small $\varepsilon>0$, preventing it to be arbitrarily big. To quantify this, we now show} the existence of $K\geq {1}, \varepsilon>0$ such that $\|\mf{Q}_i[K]-\mf{Q}^{*}[K]\|< \delta$ independently of the initial conditions, by repeated use of a continuity argument. We denote the trajectories of a nominal version of the algorithm \eqref{eq:algo} with constant inputs $\mf{P}_i[k]=\mf{P}_i[0]$ as $\mf{Q}_i^{\varepsilon=0}[k], {k\geq 1}$. Due to stability and optimally of the nominal system established in Lemma \ref{lem:global:optim}, for any $\delta_1>0$ there exist $K'\geq 0$ such that $\|\mf{Q}_i^{\varepsilon=0}[K']-\mf{Q}^{*,\varepsilon=0}[K']\|< \delta_1$. Moreover, note that $(1-\varepsilon)^{K'}\mf{P}_i[0]\preceq \mf{P}_i[K']\preceq (1+\varepsilon)^{K'}\mf{P}_i[0]$ by Assumption \ref{as:bounded}. Henceforth, for any $\delta_2>0$, there exists $\varepsilon>0$ sufficiently small to make $\mf{P}_i[K']$ as close as desired to $\mf{P}_i[0]$ to make $d_H(\mathcal{C}^*[K'], \mathcal{C}^{*,\varepsilon=0}[K'])< \delta_2$, where $d_H$ denotes the Hausdorff distance and $\mathcal{C}^{*,\varepsilon=0}[K]$ the feasible set in \eqref{eq:prob} in the nominal case. Similarly, for any $\delta_3>0$ there exists $\varepsilon>0$ such that $d_H(\mathcal{C}_i[K'], \mathcal{C}_i^{\varepsilon=0}[K'])<\delta_3$. Henceforth, by picking $\delta_1,\delta_2,\delta_3$ appropriately, $\varepsilon>0$ exist to make $\mf{Q}_i[K']$ as close as desired to $\mf{Q}_i^{\varepsilon=0}[K']$ (with error related to $\delta_3$), to $\mf{Q}^{*,\varepsilon=0}[K']$ (with error related to $\delta_1$) and to $\mf{Q}^{*}[K']$ (with error related to $\delta_2$) in that order, allowing $\|\mf{Q}_i[K']-\mf{Q}^{*}[K']\|< \delta$ for some $\delta$. 

Note that such $\varepsilon, K'$ depend on the initial conditions $\mf{Q}_i[0]$ since they might be different between trajectories. Make this dependence explicit in $K'=K'(\mf{Q}_1[0],\dots,\mf{Q}_\mfs{N}[0])$ and note that $\overline{K}(\delta):=\sup K'(\mf{Q}_1[0],\dots,\mf{Q}_\mfs{N}[0])$ exists since $\mf{Q}_1[0],\dots,\mf{Q}_\mfs{N}[0]$ lie in a compact set defined by \mbox{$\mf{Q}_i[0]=\mf{P}_i[0]^{-1}\preceq \underline{p}\mf{I}$}. A similar reasoning allows to conclude that  $\overline{\varepsilon}(\delta)=\inf\varepsilon(\mf{Q}_1[0],\dots,\mf{Q}_\mfs{N}[0])>0$ exists. Henceforth, for any $0<\varepsilon<\overline{\varepsilon}(\delta)$ it follows $\|\mf{Q}_i[\overline{K}(\delta)]-\mf{Q}^{*}[\overline{K}(\delta)]\|< \delta$ regardless of the initial conditions.

Assume that $\|\mf{Q}_i[k]-\mf{Q}^{*}[k]\|< \alpha$ holds for some arbitrary ${k\geq 0}$ and $\alpha<\delta$. Now, we show $\|\mf{Q}_i[k+1]-\mf{Q}^{*}[k+1]\|< \alpha$. The property follows by making $\varepsilon$ and $\alpha>0$ sufficiently small by taking $\mf{Q}^*[k]$ as close as desired to $\mf{Q}^*[k+1]$ and $\mf{Q}_i[k]$ close to $\mf{Q}_i[k+1]$, using a similar reasoning as before. Hence, there exists $\tilde{\varepsilon}=\inf\varepsilon(\mf{Q}_1[k],\dots,\mf{Q}_\mfs{N}[k])>0$ and $\tilde{\alpha}=\inf\alpha(\mf{Q}_1[k],\dots,\mf{Q}_\mfs{N}[k])$ if the infimum is taken over the set of possible $\mf{Q}_i[k]$, which is compact by the boundedness property established in item 2) of Theorem \ref{th:main}. 

Henceforth, we take $\varepsilon<\min(\tilde{\varepsilon}, \overline{\varepsilon}(\tilde{\alpha}))$ and $K=\overline{K}(\tilde{\alpha})$. This allows $\|\mf{Q}_i[K]-\mf{Q}^*[K]\|< \tilde{\alpha}$, maintaining such property for all subsequent steps. The result follows by noting that $\tilde{\alpha}<\delta$ and that, if $\varepsilon=0$, the static case is recovered, optimal from Lemma \ref{lem:global:optim}.

\section{Application: distributed Kalman filter}\label{sec:application}

To further motivate the distributed discrete-time dynamic outer approximation of the intersection of ellipsoids, we exemplify how to exploit Algorithm~\ref{al:algorithm} to improve the mean square error performance of distributed Kalman filtering. To achieve this, we first need to ensure consistency, i.e., the fusion of predicted covariance matrices is such that the updated covariance matrices are still (tight) outer-approximations of the true covariance we would have if all the nodes tracked all the cross-correlations among nodes (see~\cite{Julier2017General} for further details). The next proposition proves consistency in the fusion of predicted covariances using Algorithm~\ref{al:algorithm}.
\begin{proposition}
\label{prop:fusion}
Let Assumptions~\ref{as:bounded}-\ref{as:unbounded} hold. Moreover, let 
$
\{\mf{x}[k]\}_{k\geq 0} 
$ be a Gaussian stochastic process and $\bar{\mf{x}}_i[k],$ $ i\in\mathcal{I}$ be Gaussian distributed correlated unbiased estimates for $\mf{x}[k]$ with covariance matrices given by $\mf{P}_i[k]$ and unknown correlations. Let $\mf{Q}^*[k]$ the solution to \eqref{eq:prob} with its corresponding weights $\{\lambda_i^*\}_{i\in\mathcal{I}}$ and
\begin{equation}
\label{eq:fusion}
\hat{\mf{x}}[k] = \mf{Q}^*[k]^{-1}\sum_{j\in\mathcal{I}}\lambda_j^*\mf{P}_j[k]^{-1}\bar{\mf{x}}_j[k].
\end{equation}
Then, the covariance matrix $\cov\{\hat{\mf{x}}[k]-\mf{x}[k]\}\preceq \mf{Q}^*[k]^{-1}$ always.
\end{proposition}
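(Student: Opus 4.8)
The plan is to reduce the claimed matrix inequality to a family of scalar variance inequalities, one per test direction, and then absorb the unknown cross-correlations with the Cauchy--Schwarz inequality. First I would introduce the individual estimation errors $\tilde{\mf{x}}_i[k] := \bar{\mf{x}}_i[k]-\mf{x}[k]$, which are zero-mean by unbiasedness, jointly Gaussian, with $\cov\{\tilde{\mf{x}}_i[k]\}=\mf{P}_i[k]$ and \emph{unknown} cross-covariances $\mf{P}_{ij}[k]:=\cov\{\tilde{\mf{x}}_i[k],\tilde{\mf{x}}_j[k]\}$; the only structural fact I may use is that the stacked covariance with blocks $\mf{P}_{ij}[k]$ is positive semi-definite.

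Next I would record the tightness of the semi-definite constraint at the optimum, namely $\mf{Q}^*[k]=\sum_{j}\lambda_j^*\mf{P}_j[k]^{-1}$. Item \ref{it:extrema}) of Lemma \ref{le:properties} is the lever here: since $\mf{Q}^*[k]\in\mfs{co}\{\mf{P}_i[k]^{-1}\}$, it admits such a representation, and taking these coefficients as the accompanying weights $\lambda_j^*$ makes the gains $\mf{W}_j:=\lambda_j^*\mf{Q}^*[k]^{-1}\mf{P}_j[k]^{-1}$ sum to $\mf{I}$. Substituting into \eqref{eq:fusion} then cancels the $\mf{x}[k]$ term and leaves the purely error-driven form $\hat{\mf{x}}[k]-\mf{x}[k]=\sum_j\mf{W}_j\tilde{\mf{x}}_j[k]$, whence $\cov\{\hat{\mf{x}}[k]-\mf{x}[k]\}=\sum_{i,j}\mf{W}_i\mf{P}_{ij}[k]\mf{W}_j^\top$. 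I expect this tightness/unbiasedness reduction to be the main subtlety particular to our reformulation, because the general $f$ of Assumption \ref{as:unbounded} is not assumed monotone, so one must argue the PSD bound is active for the chosen weights rather than merely feasible.

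With the error in this form I would fix an arbitrary $\mf{v}\in\mathbb{R}^n$, set $\mf{u}:=\mf{Q}^*[k]^{-1}\mf{v}$, and study the scalar $Z:=\mf{v}^\top(\hat{\mf{x}}[k]-\mf{x}[k])=\sum_j\lambda_j^*(\mf{P}_j[k]^{-1}\mf{u})^\top\tilde{\mf{x}}_j[k]$. Writing $\mathrm{Var}(Z)=\sum_{i,j}\lambda_i^*\lambda_j^*(\mf{P}_i[k]^{-1}\mf{u})^\top\mf{P}_{ij}[k](\mf{P}_j[k]^{-1}\mf{u})$ and bounding each cross term by the Cauchy--Schwarz inequality for covariances, $|\cov(\cdot,\cdot)|\le\sqrt{a_i}\sqrt{a_j}$ with $a_j:=\mf{u}^\top\mf{P}_j[k]^{-1}\mf{u}$, holds uniformly over every admissible positive semi-definite choice of the $\mf{P}_{ij}[k]$. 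This yields $\mathrm{Var}(Z)\le(\sum_j\lambda_j^*\sqrt{a_j})^2$, and a second Cauchy--Schwarz step together with $\sum_j\lambda_j^*\le 1$ gives $(\sum_j\lambda_j^*\sqrt{a_j})^2\le\sum_j\lambda_j^* a_j=\mf{u}^\top\mf{Q}^*[k]\mf{u}=\mf{v}^\top\mf{Q}^*[k]^{-1}\mf{v}$.

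Since $\mf{v}$ is arbitrary, the per-direction bound $\mf{v}^\top\cov\{\hat{\mf{x}}[k]-\mf{x}[k]\}\mf{v}\le\mf{v}^\top\mf{Q}^*[k]^{-1}\mf{v}$ is precisely $\cov\{\hat{\mf{x}}[k]-\mf{x}[k]\}\preceq\mf{Q}^*[k]^{-1}$, and it holds for every correlation structure because both Cauchy--Schwarz steps are correlation-agnostic. Once the error is brought into the form $\sum_j\mf{W}_j\tilde{\mf{x}}_j[k]$, the remainder is the classical covariance-intersection estimate; the genuinely new work is the tightness/unbiasedness argument of the second step.
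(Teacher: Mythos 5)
Your proof is correct, and its first step coincides with the paper's: both of you observe that at the optimum the semi-definite constraint is active, $\mf{Q}^*[k]=\sum_{j}\lambda_j^*\mf{P}_j[k]^{-1}$ (the paper deduces this from $\mf{Q}^*[k]\in\mfs{rebdr}(\mathcal{C}^*[k])$, you from Lemma~\ref{le:properties}-\ref{it:extrema}); this is indeed the only step specific to the reformulation, since it is what makes the gains sum to $\mf{I}$ and the fused estimate unbiased. Where you diverge is the second half: the paper simply identifies \eqref{eq:fusion} as the standard covariance intersection rule and cites the known consistency result, whereas you re-derive that consistency from first principles via the error decomposition $\hat{\mf{x}}[k]-\mf{x}[k]=\sum_j\mf{W}_j\tilde{\mf{x}}_j[k]$, a reduction to scalar variances along test directions $\mf{v}$, and a double Cauchy--Schwarz bound (once on the unknown cross-covariances, once on the weighted sum using $\sum_j\lambda_j^*\le 1$). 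Your chain $\mathrm{Var}(Z)\le(\sum_j\lambda_j^*\sqrt{a_j})^2\le\sum_j\lambda_j^*a_j=\mf{v}^\top\mf{Q}^*[k]^{-1}\mf{v}$ checks out and is exactly the classical argument behind the cited reference, so the two proofs buy different things: the paper's is shorter and leans on prior literature, while yours is self-contained and makes explicit that the bound is uniform over all admissible correlation structures, which is the content of the word ``always'' in the statement.
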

\begin{proof}
    Assumption \ref{as:unbounded} implies $\mf{Q}^*[k] = \sum_{j\in\mathcal{I}}\lambda_j^*\mf{P}_j[k]^{-1}$ due to $\mf{Q}^*[k]\in\mfs{rebdr}(\mathcal{C}^*[k])$. This means that \eqref{eq:fusion} is the standard covariance intersection fusion rule, whose consistency was proven in \cite{Niehsen2002}.
\end{proof}
{Proposition \ref{prop:fusion} exploits consistency to guarantee that the estimates at each node can be fused when cross-correlations are unknown, either for static of dynamic input matrices.} The previous result motivates estimating \eqref{eq:fusion} in a distributed way:
\begin{equation}
    \label{eq:fusion2}
    \mf{Q}_i[k]\hat{\mf{x}}_i[k] = \frac{1}{\mfs{N}}\sum_{j\in\mathcal{I}}\left(\frac{\lambda_\mf{P}^j[k]}{1-\sum_{l\in\mathcal{N}_j}\lambda_l^j[k]}\right)\mf{P}_j[k]^{-1}\bar{\mf{x}}_j[k]{,}
\end{equation}
where the right hand side of \eqref{eq:fusion2} can be computed using standard dynamic consensus tools with local inputs $\left(\frac{\lambda_\mf{P}^j[k]}{1-\sum_{l\in\mathcal{N}_j}\lambda_l^j[k]}\right)\mf{P}_j[k]^{-1}\bar{\mf{x}}_j[k]$.
\begin{proposition}
\label{prop:fusion2}
    Consider the same setting as in Proposition \ref{prop:fusion}. Moreover, let ${\Bar{\theta}} = 1$. Then, the update rule in eq.~\eqref{eq:fusion} leads to 
    $
    \lim_{k\to\infty}\|\hat{\mf{x}}_i[k] - \hat{\mf{x}}[k]\|= 0.
    $
\end{proposition}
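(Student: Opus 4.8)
The plan is to prove that the distributed information vector $\mf{Q}_i[k]\hat{\mf{x}}_i[k]$ built by \eqref{eq:fusion2} converges, at every node, to the centralized one $\mf{Q}^*[k]\hat{\mf{x}}[k]=\sum_{j\in\mathcal{I}}\lambda_j^*\mf{P}_j[k]^{-1}\bar{\mf{x}}_j[k]$ appearing in \eqref{eq:fusion}, and then to recover $\hat{\mf{x}}_i[k]\to\hat{\mf{x}}[k]$ by left-multiplying with the (asymptotically common and nonsingular) matrix $\mf{Q}_i[k]^{-1}$. I would organise the argument in three blocks: convergence of the matrix part, reconstruction of the information vector by consensus, and a final inversion step.

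First, I would reuse the convergence results already established. Because $\bar{\theta}=1$, Lemma \ref{lem:global:optim} together with item 3) of Theorem \ref{th:main} gives $\mf{Q}_i[k]\to\mf{Q}^*$ at every node. By Assumption \ref{as:bounded} and item 2) of Theorem \ref{th:main}, the iterates stay in the compact set $\{\mf{0}\preceq\mf{Q}\preceq(1/\underline{p})\mf{I}\}$ and the limit $\mf{Q}^*$ is nonsingular (it is a convex combination of matrices $\mf{P}_j^{-1}\succeq(1/\overline{p})\mf{I}$), so $\mf{Q}_i[k]^{-1}$ is well defined and, by continuity of inversion on positive definite matrices bounded away from singularity, $\mf{Q}_i[k]^{-1}\to\mf{Q}^{*-1}$. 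Convergence of $\mf{Q}_i[k]$ also forces the optimal multipliers $\lambda_\mf{P}^j[k],\lambda_l^j[k]$ solving \eqref{eq:algo} to settle, so the per-node inputs $u_j[k]:=\big(\lambda_\mf{P}^j[k]/(1-\sum_{l\in\mathcal{N}_j}\lambda_l^j[k])\big)\mf{P}_j[k]^{-1}\bar{\mf{x}}_j[k]$ that feed the dynamic-consensus stage of \eqref{eq:fusion2} are asymptotically constant.

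Second, I would identify the consensus limit with the centralized numerator. Writing the unrolled recursion as $\mf{Q}_i[k]=\sum_{m}w_{im}[k]\mf{P}_m^{-1}$ and matching $w_{im}[k]\to\lambda_m^*$ against the representation $\mf{Q}^*=\sum_m\lambda_m^*\mf{P}_m^{-1}$ guaranteed by $\mf{Q}^*\in\mfs{rebdr}(\mathcal{C}^*)$ (Lemma \ref{le:properties}, used also in Proposition \ref{prop:fusion}), I would establish the weight identity $\tfrac1{\mfs{N}}\lambda_\mf{P}^j/(1-\sum_{l\in\mathcal{N}_j}\lambda_l^j)\to\lambda_j^*$, so that $\tfrac1{\mfs{N}}\sum_{j}u_j[k]\to\sum_j\lambda_j^*\mf{P}_j^{-1}\bar{\mf{x}}_j=\mf{Q}^*\hat{\mf{x}}$. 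Since $\mathcal{G}$ is connected and the consensus inputs $u_j[k]$ are asymptotically constant, standard dynamic average consensus drives the estimate at every node to the common average, i.e. $\mf{Q}_i[k]\hat{\mf{x}}_i[k]\to\mf{Q}^*\hat{\mf{x}}$. Left-multiplying by $\mf{Q}_i[k]^{-1}\to\mf{Q}^{*-1}$ then yields $\hat{\mf{x}}_i[k]\to\mf{Q}^{*-1}\mf{Q}^*\hat{\mf{x}}=\hat{\mf{x}}$, which is the claim.

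I expect the main obstacle to be the second block. On one hand, the weight identity couples the purely local multipliers of \eqref{eq:algo} with the global L\"owner-John coefficients $\lambda_j^*$ through the flow balance of the weight recursion, and it must be handled with care because the self-injection $\lambda_\mf{P}^j$ vanishes at equilibrium, making the defining ratio a $0/0$ limit that has to be evaluated along the trajectory rather than at the fixed point. On the other hand, the consensus step operates on time-varying inputs that converge only asymptotically, so its convergence must be argued in a way that is robust to vanishing input perturbations; I would make this rigorous with the same continuity/robustness reasoning used for item 3) of Theorem \ref{th:main}, combined with the fact that the inputs live in a compact set by the boundedness property.
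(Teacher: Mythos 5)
Your overall skeleton (convergence of $\mf{Q}_i[k]$ to $\mf{Q}^*$, identification of the distributed information vector with the centralized one via consensus, then inversion of the nonsingular limit) matches the paper's, and you have correctly located the crux in what you call the ``weight identity.'' The gap is in how you propose to establish it. You want to unroll the recursion as $\mf{Q}_i[k]=\sum_m w_{im}[k]\mf{P}_m^{-1}$ and ``match'' $w_{im}[k]\to\lambda_m^*$ against a representation $\mf{Q}^*=\sum_m\lambda_m^*\mf{P}_m^{-1}$ fixed a priori by the centralized problem. That matching is not justified: the $\mf{P}_m^{-1}$ are $\mfs{N}$ points in a space of dimension $n(n+1)/2$, so the convex representation of $\mf{Q}^*$ is in general non-unique, and convergence of the combination $\sum_m w_{im}[k]\mf{P}_m^{-1}$ does not force the individual coefficients to converge to any prescribed $\lambda_m^*$. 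You are also proving the identity in the wrong direction, which is exactly why you run into the $0/0$ degeneracy you flag: at equilibrium the local multipliers of \eqref{eq:algo} are themselves non-unique (e.g.\ $\lambda_\mf{P}^i=0$, $\lambda_i^i=1$ is always a feasible certificate for $\mf{Q}^*$), so the ratio $\lambda_\mf{P}^i/(1-\sum_{l\in\mathcal{N}_i}\lambda_l^i)$ need not converge to a pre-specified centralized weight at all.

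The paper goes the other way: it \emph{constructs} the centralized weights from the local equilibrium multipliers rather than trying to recover given ones. At equilibrium the active constraint of \eqref{eq:algo} reads $(1-\sum_{j\in\mathcal{N}_i}\lambda_j^i)\mf{Q}^*\preceq\lambda_\mf{P}^i\mf{P}_i[k]^{-1}$, with equality under Assumption \ref{as:unbounded}; dividing by $1-\sum_{j\in\mathcal{N}_i}\lambda_j^i$ and summing over $i$ gives $\mfs{N}\mf{Q}^*=\sum_i\bigl(\lambda_\mf{P}^i/(1-\sum_{j\in\mathcal{N}_i}\lambda_j^i)\bigr)\mf{P}_i[k]^{-1}$. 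One then \emph{defines} $\lambda_i^*:=\frac{1}{\mfs{N}}\,\lambda_\mf{P}^i/(1-\sum_{j\in\mathcal{N}_i}\lambda_j^i)$, checks $0\leq\lambda_i^*\leq 1/\mfs{N}$ and $\sum_i\lambda_i^*\leq 1$ from $\lambda_\mf{P}^i\leq 1-\sum_{j\in\mathcal{N}_i}\lambda_j^i$, and observes that these are therefore legitimate weights for \eqref{eq:prob} and hence for the fusion \eqref{eq:fusion}. With that identity in hand, your blocks on consensus over the asymptotically constant inputs and on inverting $\mf{Q}_i[k]\to\mf{Q}^*\succ\mf{0}$ go through essentially as you describe; without it, the central claim $\frac{1}{\mfs{N}}\sum_j u_j[k]\to\mf{Q}^*\hat{\mf{x}}$ remains unproved.
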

\begin{proof}
    First, let 
$
\lambda^*_i = \frac{1}{\mfs{N}}\left(\frac{\lambda_\mf{P}^i}{1-\sum_{j\in\mathcal{N}_i}\lambda_j^i}\right),
$
where $\lambda^i_\mf{P}$ and $\lambda_j^i$ come from \eqref{eq:algo} in equilibrium at agent $i\in\mathcal{I}$. Then,
\begin{equation}
\label{eq:bound}
\mf{Q}[k]^*\preceq \sum_{j=1}^\mfs{N} \lambda_j^*\mf{P}_j[k]^{-1}.
\end{equation}
and $\sum_{j=1}^\mfs{N} \lambda_j^*\leq 1$ with $\lambda_j^*\geq 0$. Moreover, if Assumption \ref{as:unbounded} is satisfied, \eqref{eq:bound} is complied with equality.
Now, note that Lemma \ref{lem:global:optim} implies that in equilibrium all agents are in consensus at $\mf{Q}_i[k]=\mf{Q}^*$ with constant $\mf{Q}^*=\mf{Q}^*[K]$. Moreover, due to the constraints in \eqref{eq:algo}:
$
\mf{Q}_i[k] \kern -0.1cm = \mf{Q}^* \preceq   \lambda^i_\mf{P}\mf{P}_i[k]^{-1} + \sum_{j\in\mathcal{N}_i}\kern -0.1cm\lambda_j^i\mf{Q}^*$ implies $\mf{Q}^*\kern -0.1cm\left( - \sum_{j\in\mathcal{N}_i}\kern -0.1cm\lambda_j^i\kern -0.1cm\right)\kern -0.1cm\preceq \kern -0.1cm\lambda^i_\mf{P}\mf{P}_i[k]^{-1}.
$
Now, dividing by $1 - \sum_{j\in\mathcal{N}_i}\lambda_j^i$ and applying a summation over all agents in the previous relation:
$
\mfs{N}\mf{Q}^* \preceq \sum_{i=1}^\mfs{N}\left(\frac{\lambda_\mf{P}^i}{1 - \sum_{j\in\mathcal{N}_i}\lambda_j^i}\right)\mf{P}_i[k]^{-1}
$
which reduces to \eqref{eq:bound} by the definition of $\lambda_i^*$. Now, note that 
$
0\leq \lambda_\mf{P}^i\leq 1 - \sum_{j\in\mathcal{N}_i}\lambda_j^i
$
implying $\lambda_i^*\geq 0$ and as a result that $\lambda_i^* \leq 1/\mfs{N}$.
Hence, $\sum_{j=1}^\mfs{N}\lambda_j^*\leq 1$. In addition, if Assumption \ref{as:unbounded} is satisfied, the all previous inequalities are interchanged with equalities.

As a result, the proof of the proposition follows by noting that \eqref{eq:bound} is satisfied with equality as well as Theorem \ref{th:main} ensuring $\lim_{k\to\infty}\|\mf{Q}_i[k]-\mf{Q}^*[k]\|=0$.
\end{proof}

The results of Proposition~\ref{prop:fusion} and \ref{prop:fusion2} lead to a novel distributed Kalman filter with the following four steps: at instant $k$ and given $\hat{\mathbf{x}}_i[k-1], \hat{\mathbf{P}}_i[k-1]$, each node (i) predicts $\bar{\mathbf{x}}_i[k], \bar{\mathbf{P}}_i[k]$ using the known linear stochastic dynamics of the target system \mbox{$\mathbf{x}[k] = \mathbf{A}\mathbf{x}[k-1] + \mathbf{w}[k]$}, with $\mathbf{w}[k]$ a zero-mean Gaussian noise with covariance $\mathbf{W}$; (ii) exchanges of $\bar{\mathbf{x}}_i[k], \bar{\mathbf{P}}_i[k], \mathbf{Q}_i[k-1]$ with neighbors $i \in \mathcal{N}_i$; (iii) uses Algorithm~\ref{al:algorithm} to obtain $\mathbf{Q}_i[k]$, which is employed as the new covariance matrix to be updated; (iv) updates the predictions using \eqref{eq:fusion} and the Kalman filter update equations using the measurements $\mathbf{y}_i[k] = \mathbf{H}_i\mathbf{x}_i[k-1] + \mathbf{v}_i[k]$, with $\mathbf{v}_i[k]$ a zero-mean Gaussian noise with covariance $\mathbf{V}_i$, obtaining $\hat{\mathbf{x}}_i[k], \hat{\mathbf{P}}_i[k]$. We refer to \cite{Olfati2007DKF,Sebastian2021CDC} for the explicit expressions for the prediction and correction steps of distributed Kalman filtering. {It is interesting to remark that, even though Eq. \eqref{eq:fusion} is similar to a batch version of CI, it is not since it incorporates information from all nodes through $\mf{Q}^*[k], \lambda_j^*$, which are computed using Algorithm \ref{al:algorithm}. Hence, while existing methods are limited to local CI, our method can calculate a global CI once Algorithm \ref{al:algorithm} has converged.}

\section{Illustrative examples}\label{sec:examples}

\subsection{Constant inputs}\label{subsec:static_example}

First, we evaluate Algorithm~\ref{al:algorithm} in the static case, i.e., $\mathbf{P}_i[k] = \mathbf{P}_i$ $\forall i \in \mathcal{I}$. We generate a random connected graph $\mathcal{G}$ of $\mathsf{N}=6$ nodes, leading to the following edge set: \mbox{${\mathcal{F}}=\{(1,2), (1,3), (1,5), (2,4), (2,5), (2,6)\}$}. Input matrices are of dimension $n=2$, and they are initialized as $\mathbf{P}_i = \mathbf{L}_i^{\top}\mathbf{L}_i$, where the elements of each $\mathbf{L}_i$ are randomly generated using a uniform distribution between $-1$ and $1$. The resulting ellipses are characterized by the following matrices: 
$$
\begin{aligned}
    \mathbf{P}_1^{-1} \kern -0.13cm=\kern -0.13cm
    &
    \begin{pmatrix}
        4.6 & -3.8
        \\
        -3.8 & 4.2
    \end{pmatrix}
    &\kern -0.3cm
    \mathbf{P}_2^{-1} \kern -0.13cm=\kern -0.13cm 
    &
    \begin{pmatrix}
        1.5 & -0.2
        \\
        -0.2 & 2.0
    \end{pmatrix}
    &\kern -0.3cm
    \mathbf{P}_3^{-1} \kern -0.13cm=\kern -0.13cm 
    &
    \begin{pmatrix}
        9.5 & 0.4
        \\
        0.4 & 2.3
    \end{pmatrix}
    \\
    \mathbf{P}_4^{-1} \kern -0.13cm =\kern -0.13cm 
    &
    \begin{pmatrix}
        2.8 & -2.2
        \\
        -2.2 & 4.5
    \end{pmatrix}
    &\kern -0.3cm
    \mathbf{P}_5^{-1} \kern -0.13cm =\kern -0.13cm  
    &
    \begin{pmatrix}
        11.0 & 7.9
        \\
        7.9 & 6.7
    \end{pmatrix}
    &\kern -0.3cm
    \mathbf{P}_6^{-1} \kern -0.13cm =\kern -0.13cm
    &
    \begin{pmatrix}
        11.5 & -3.9
        \\
        -3.9 & 3.1
    \end{pmatrix}
\end{aligned}
$$
Since the input ellipsoids are constant, we set $\bar{\theta} = 1$. Also, let \mbox{$q^i_1[k] \leq q^i_2[k] \leq \hdots \leq q^i_n[k] $} the sorted eigenvalues of $\mathbf{Q}_i[k]$ and $q^*_1[k] \leq q^*_2[k] \leq \hdots \leq q^*_n[k] $ the sorted eigenvalues of $\mathbf{Q}^*[k]$.

\begin{figure}
    \centering
\includegraphics[width=0.9\columnwidth]{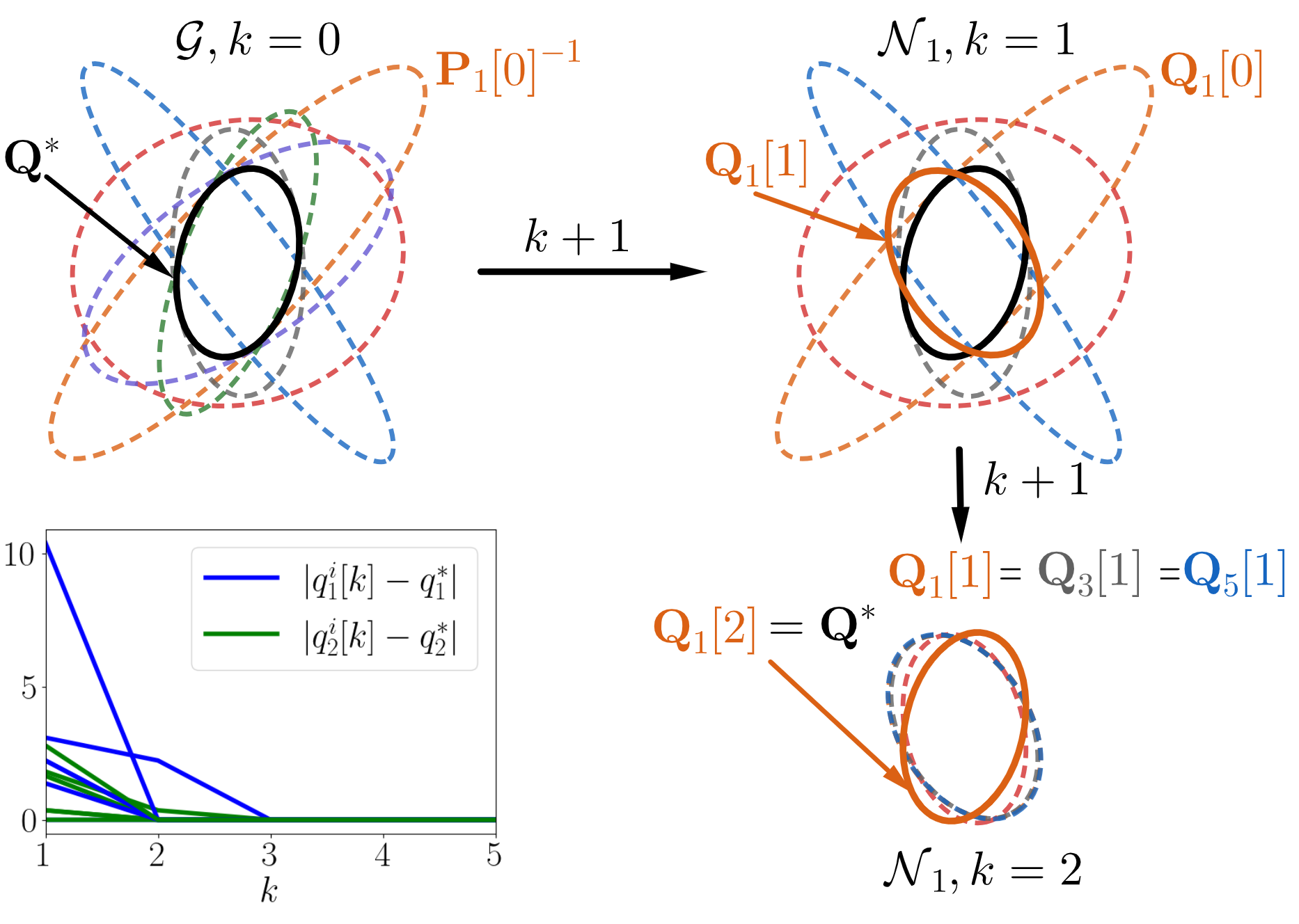}
    \caption{Illustrative example of Algorithm~\ref{al:algorithm} applied to a static problem. At each time step, the error between the eigenvalues of the global optimum matrix and the local estimates $|q^i_j[k] - q^*_j|$ decreases until the error becomes zero in finite time (in this case, $K=3$). The top left pannel shows the initial ellipsoids at each node and the optimal solution from the original centralized L\"owner-John method. The top right pannel shows, for node $1$, its estimate after using Algorithm \ref{al:algorithm} at $k=1$ and the ellipsoids exchanged with its neighbors. The bottom right pannel depicts the estimate at node $1$ and $k=2$ after using again Algorithm \ref{al:algorithm}, depicting that node $1$ already recovers the desired $\mathbf{Q}^*$. The bottom left pannel depicts the evolution of the error with time for all the nodes.}
    \label{fig:static_example}
\end{figure}

Fig.~\ref{fig:static_example} shows the evolution of the estimates at each node with time. To measure the difference between $\mathbf{Q}_i[k]$ and $\mathbf{Q}^*$ we compute the absolute error between their sorted eigenvalues as $|q^i_j[k] - q^*_j|$ $\forall j \in \{1, \hdots, n\}$ and $\forall i \in \mathcal{I}$. We can observe how the estimates converge in finite time to the global optimum of the centralized outer L\"owner-John method, with $K=3$. By focusing on node $i=1$, we can see how the estimates are refined at each time step, reaching the global optimum at $k=2$. The value of $K$ depends on the topology and how the input ellipsoids are across the network. However, for any case, the global minimum is perfectly attained in finite time.

\subsection{Dynamic case}\label{subsec:dynamic_example}

For the dynamic case where the input matrices evolve with time, we use the same graph from Subsection~\ref{subsec:static_example}. We use the same initial ellipsoids of Section~\ref{subsec:static_example} and apply a different oscillatory rotation to each of them at each time step. In particular, $\mathbf{P}_i[k] = (1 + \phi_i[k])\mathbf{R}_i[k]^{\top}\mathbf{P}_i[0]\mathbf{R}_i[k]$ where $\mathbf{R}_i[k]$ is a 2D rotation matrix with rotation angle $\psi_i[k] = A \sin(\omega_i k)$, and $\phi_i[k] = B \sin (\omega_i k)$. We choose $A = \frac{\pi}{25}$, $B = \frac{1}{200}$ and $\omega_i \in [1,2]$. It can be shown that $\bar{\theta}=\frac{1}{0.98}$ complies Assumption \ref{as:bounded}.  

\begin{figure}
    \centering
    \begin{tabular}{c}
    \includegraphics[width=1\columnwidth]{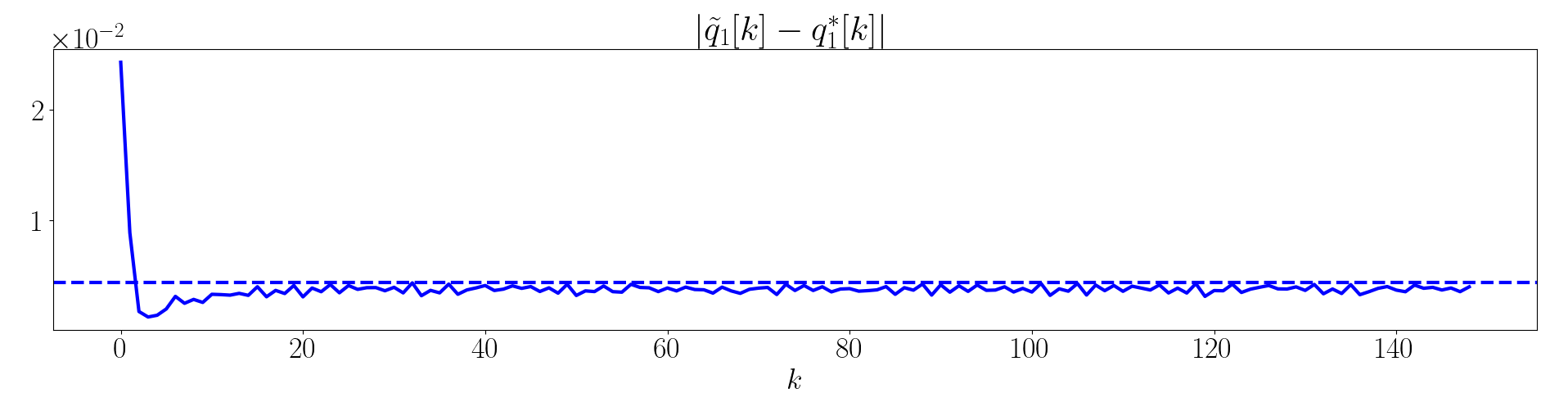}
         \\
    \includegraphics[width=1\columnwidth]{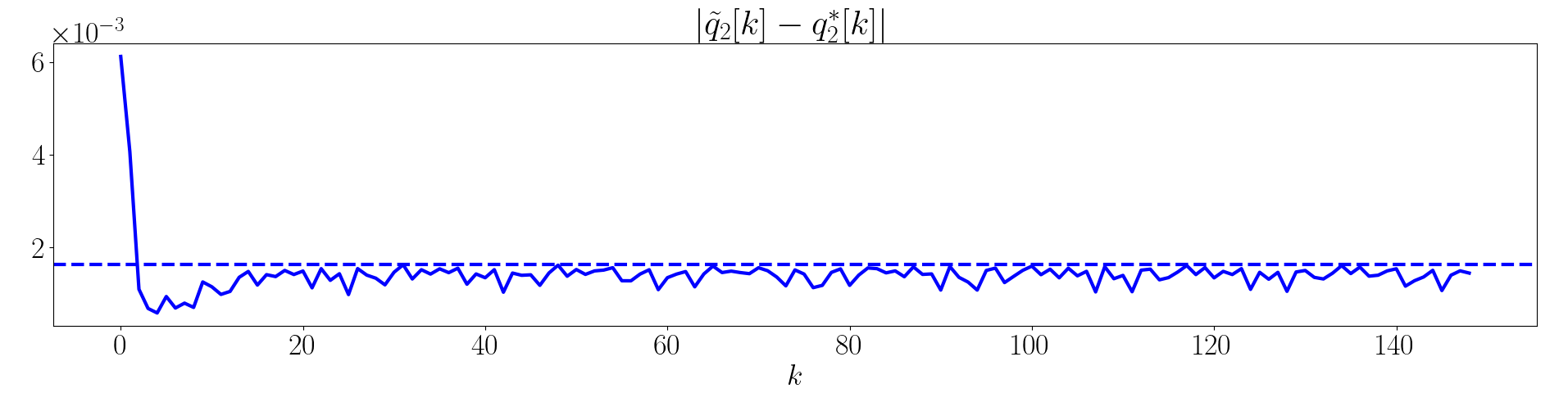}
    \end{tabular}
    \caption{Illustrative example of Algorithm~\ref{al:algorithm} applied to a dynamic problem. The plot depicts the time evolution of the error between the eigenvalues of the global optimum matrix and the average of the local estimates $|\tilde{q}_j[k] - q^*_j|$, for $j\in\{1,2\}$.}
    \label{fig:dynamic_example}
\end{figure}

Fig.~\ref{fig:dynamic_example} depicts the evolution of the error between the eigenvalues of the global optimum matrix and the average of the local estimates $|\tilde{q}_j[k] - q^*_j|$ with time, where $\tilde{q}_j[k] = \frac{1}{\mathsf{N}}\sum_{i=1}^{\mathsf{N}}q^i_j[k]$. It is observed the error trajectories converge in finite-time to some bounded region around the global optimum after a transient, which is maintained invariant for the rest of the experiment, consistent with the sub-optimality result in Theorem \ref{th:main}. 

\subsection{Application example: distributed Kalman filtering}\label{subsec:dkf}


We compare the our novel distributed Kalman filter developed in Section~\ref{sec:application} with the consensus distributed Kalman filter in \cite{Olfati2007DKF} (CDKF) and the equivalent centralized Kalman filter that we obtain by collecting all the measurements acquired by each node at a central server, fusing them and using a standard Kalman filter \cite{Kalman1960Kalman}.
The former serves as a comparison with an establish distributed Kalman filter whereas the latter serves as a baseline. We run $100$ simulations with randomly generated connected graphs of $\mathsf{N}=50$ nodes. The target system is the same in \cite{Sebastian2021CDC}, and the ellipsoids are of $n=4$. We replicate the experimental setting reported in \cite{Sebastian2021CDC}, choosing $\bar{\mathbf{x}}_i[0], \bar{\mathbf{P}}_i[0]$ randomly. We set $\mathbf{W} = 2\times10^{-8} \mathbf{I}$ and $\mathbf{V}_i = \mu_i \mathbf{I}$ with $\mu_i$ drawn from a uniform distribution between $0.03$ and $0.05$.

\begin{figure}
    \centering
    \includegraphics[width=1\columnwidth]{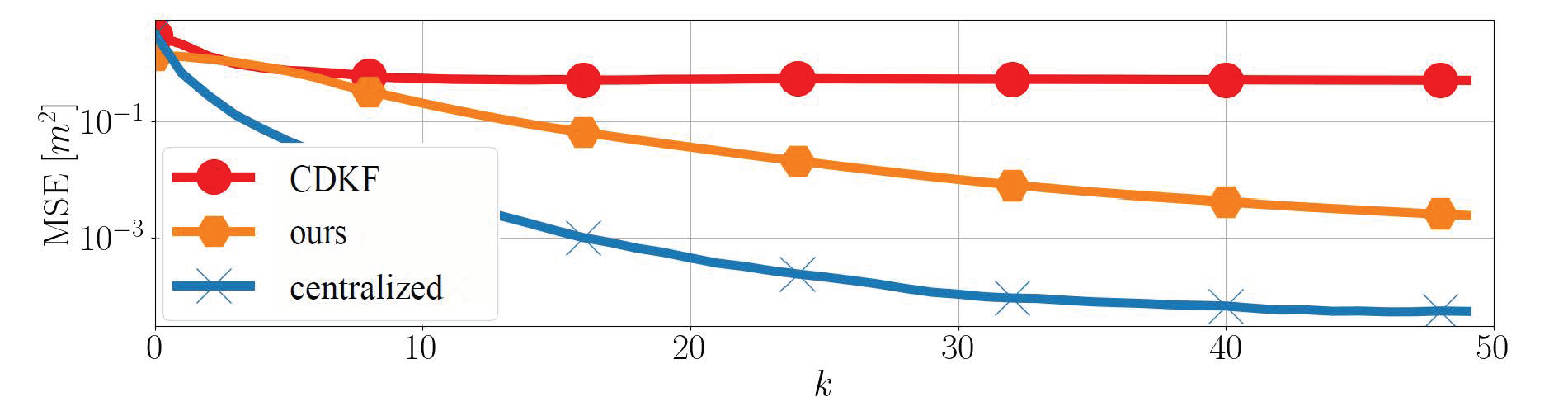}
    \caption{Evolution of the averaged mean square error with time for our novel distributed Kalman filter, the CDKF and the equivalent centralized Kalman filter. }
    \label{fig:dkf_result}
\end{figure}

Fig.~\ref{fig:dkf_result} reports the average of the mean square error across nodes and simulation runs for the two distributed Kalman filters:
$
\text{MSE}[k] = \frac{1}{100}\sum_{s=1}^{100}\frac{1}{\mathsf{N}}\sum_{i=1}^{\mathsf{N}}||\bar{\mathbf{x}}^s_i[k] - \mathbf{x}^s[k]||^2.
$ 
Our proposed algorithm surpasses CO-DKF by exploiting a better approximation of the true fused covariance matrix under known correlations: while our approach tracks the global minimum across all the nodes, CO-DKF only computes the outer approximation of the intersection of the neighboring ellipsoids. The difference between both approximations is greater for larger and sparser networks. 

\section{Conclusions}\label{sec:conclusions}

This work presented the first distributed and discrete-time algorithm to compute the tightest outer ellipsoid that approximates the intersection of a set of $\mathsf{N}$ ellipsoids distributed across a network. We reformulated the centralized outer L\"owner-John method to a local semi-definite program that exploits the neighboring information. In particular, by exchanging and scaling the neighboring estimates of the global optimum, the algorithm tracks the global minimum in finite time and perfect accuracy in static problems. In dynamic problems, the proposal tracks the global minima in finite time and with bounded accuracy. As a figure of merit, the proposed algorithm was applied in a distributed Kalman filtering task, demonstrating superior performance than the state-of-the-art by exchanging one additional matrix with neighbors. Nevertheless, the proposed algorithm can be used for distributed identification, estimation, and control problems.


\bibliographystyle{IEEEtran}
\bibliography{biblio}

\balance

\end{document}